\let\psi=\Psi
\let\hat=\widehat
\let\tilde=\widetilde
\numberwithin{equation}{subsection}
\newtheorem{theorem}[equation]{Théorème} 
\newtheorem{lemme}[equation]{Lemme}
\newtheorem{corollaire}[equation]{Corollaire}
\theoremstyle{remark}
\newtheorem{definition}[equation]{Définition}
\DeclareMathOperator{\Char}{Char}
\DeclareMathOperator{\DR}{DR}
\DeclareMathOperator{\ord}{ord}
\DeclareMathOperator{\Spec}{Spec}
\def\cartesien{\ar@{}[rd]|{\square}}
\DeclareMathOperator{\Ext}{Ext}
\DeclareMathOperator{\Gr}{Gr}
\DeclareMathOperator{\an}{an}
\DeclareMathOperator{\End}{End}
\DeclareMathOperator{\rg}{rg}
\title[Un analogue pour les connexions d'une construction d'Abbes et Saito ]{Un théorème de linéarité de la construction d'Abbes et Saito pour les connexions méromorphes. 
}
\author[J.-B.~Teyssier]{Jean-Baptiste Teyssier}
\date{}
\curraddr{Freie Universität Berlin, Mathematisches Institut, Arnimallee 3, 14195 Berlin, Germany}
\email{teyssier@zedat.fu-berlin.de}
\begin{document}
\maketitle
\section*{Introduction}
Ce travail s'inscrit dans le contexte des analogies entre la ramification sauvage des faisceaux $\ell$-adiques et l'irrégularité des connexions méromorphes en caractéristique $0$. \\ \indent 
Si $X$ est une variété sur un corps parfait de caractéristique $p>0$, $D$ un diviseur à croisements normaux de $X$ et $U= X\setminus D$, Abbes et Saito ont dégagé dans \cite{Saito} et \cite{clean} une procédure de spécialisation le long de $D$ pour les faisceaux $\ell$-adiques constructibles lisses sur $U$ produisant une mesure géométrique de la ramification sauvage le long de $D$. Dans le cas d'un trait \cite{AS}, le receptacle de leur invariant est une droite vectorielle admettant une interprétation différentielle, et l'invariant en question un ensemble fini de points fermés de cette droite. En dimension supérieure, ce receptacle devient un fibré $T$ au-dessus de $D$ et sous une hypothèse de contrôle de la ramification en tout point de $D$, le spécialisé d'Abbes et Saito est un système local sur $T$ de transformé de Fourier à support fini au-dessus de $D$. \\ \indent
Cette propriété de finitude tient à ce qu'après restriction à une fibre de $T$, le spécialisé d'Abbes et Saito s'identifie à une somme directe finie d'images inverses par des formes linéaires du faisceau d'Artin-Schreier sur la droite affine. C'est la propriété \textit{d'additivité} \cite[3.1]{clean}. 
\\ \indent
Pour les connexions méromorphes, la spécialisation d'Abbes et Saito fait sens et on démontre dans ce texte\footnote{Voir \ref{conj}.} que si $D$ est un hyperplan de l'espace affine $\mathds{A}^{n}_{\mathds{C}}$, et si $\mathcal{M}$ est une connexion sur $\mathds{A}^{n}_{\mathds{C}}$ méromorphe le long de $D$, alors le $\mathcal{D}$-module défini par la construction de \cite{clean} se restreint aux fibres de $T$ en une somme de modules exponentiels associés à des formes linéaires. C'est la propriété de \textit{linéarité} \ref{deflin}. Dans le cas d'un germe formel de courbe lisse sur un corps de caractéristique $0$, le théorème de linéarité \ref{conj} est précisé dans \cite{Tey} par une formule explicite.
\\ \indent
La condition imposée à $\mathcal{M}$ dans \ref{conj} est générique sur $D$, alors que dans \cite{clean} elle concerne tous les points du diviseur. Le théorème \ref{conj} suggère donc que dans le contexte $\ell$-adique, on doit pouvoir obtenir un théorème d'additivité à l'aide d'une condition portant seulement sur le point générique de $D$. Pour les connexions méromorphes, travailler avec cette condition plus faible a cependant le désavantage que l'on obtient a priori pas d'autre information sur les $\mathcal{H}^{i}$ supérieurs des restrictions que leur lissité. 
\\ \indent
En un point de bonne décomposition formelle\footnote{appelé aussi point non tournant dans la littérature.}, les formes linéaires produites par la construction d'Abbes et Saito s'obtiennent comme en dimension 1 à partir des formes différentielles de la décomposition de Levelt-Turrittin générique de $\mathcal{M}$. Le fait surprenant, conséquence de \ref{conj}, est que de telles formes linéaires existent aussi en un point tournant pour $\mathcal{M}$. Ce qui se passe en un tel point ne sera pas abordé dans cet article.
\\ \indent
Pour prouver \ref{conj}, une difficulté est la non-exactitude à gauche de l'image inverse pour les $\mathcal{D}$-modules. Elle fait de la notion de linéarité ponctuelle \ref{linpon} d'un module holonome sur un fibré vectoriel $E$ une notion relativement mauvaise puisque non stable par sous-objet. Pour contourner ce problème, on introduit pour tout $\mathcal{D}$-module sur $E$ une condition \ref{Lxdef} plus forte qui se comporte bien vis-à-vis des sous-objets et de leurs cycles proches. L'analyse de cette interaction est traitée en \ref{grmoins} et \ref{sousmodule}. 
\\ \indent
Suivant le même canevas, on introduit en \ref{Pxdef} une condition aux propriétés similaires impliquant la lissité des $\mathcal{H}^{i}$  des restrictions aux fibres de $E$.
\\ \indent  
La seconde difficulté est la mise en défaut de la décomposition de Levelt-Turrittin en dimension $>1$. Pour y palier, on utilise les réseaux canoniques de Malgrange \cite{Mal96}, qui généralisent aux connexions méromorphes quelconques les réseaux canoniques de Deligne \cite{Del} définis dans le cas à singularité régulière. C'est l'outil  permettant de montrer que la construction d'Abbes et Saito vérifie les conditions \ref{Lxdef} et \ref{Pxdef}. Ce dernier point fait l'objet de \ref{loctri} et \ref{malgrange}\footnote{En dimension 2, les réseaux de Malgrange sont localement libres \cite[3.3.1]{Mal96}. En particulier, \ref{malgrange} est en dimension 2 l'analogue analytique d'un théorème de forme normale d'André \cite[3.3.2]{Andre} pour les connexions méromorphes algébriques formelles, obtenu dans \textit{loc. it.} par des méthodes algébriques.}.
\\ \indent

Je remercie Claude Sabbah pour ses conseils durant  l'élaboration de ce travail. Je remercie Takeshi Saito pour d'innombrables remarques sur une première version de ce texte, ainsi qu'Yves André pour m'avoir fait remarquer qu'en dimension $>2$, les réseaux de Malgrange ne sont pas nécessairement localement libres.

\section{Modules linéaires}
\subsection{Généralités}
\begin{definition}\label{deflin}
Soit $E$ est un $\mathds{C}$-espace vectoriel de dimension finie. On appelle \textit{module linéaire sur $E$} tout module somme directe finie de modules du type $\mathcal{E}^{\varphi}:=(\mathcal{O}_{E},d+d\varphi)$ où $\varphi: E \longrightarrow \mathds{C}$ est une forme linéaire. 
\end{definition}

\begin{lemme}\label{extension}
Soient  $\mathcal{E}_{1}$ et  $\mathcal{E}_{2}$ des modules linéaires sur $\mathds{A}^{n}_{\mathds{C}}$. Alors $$\Ext^{1}_{\mathcal{D}_{\mathds{A}^{n}_{\mathds{C}}}}(\mathcal{E}_{1},\mathcal{E}_{2})\simeq 0.$$
\end{lemme}
\begin{proof}
On peut supposer que les $\mathcal{E}_{i}$ sont de rang 1. Par tensorisation, on peut aussi supposer que $\mathcal{E}_{1}$ est trival. Il faut donc montrer la nullité du premier groupe de cohomologie de De Rham algébrique $\DR\mathcal{E}$ de $\mathcal{E}:=\mathcal{E}_{1}$ avec $\mathcal{E}$ donné par la 1-forme $a_{1}dx_{1}+ \cdots + a_{n}dx_{n}$ où $a_{i}\in \mathds{C}$. Si $n=1$, c'est un calcul immédiat. Supposons donc $n>1$. \\ \indent
Si tous les $a_{i}$ sont nuls, $\DR\mathcal{E}$ est le complexe de Koszul $\mathcal{K}(\partial_{1},\dots ,\partial_{n})$ pour le module $\mathds{C}[x_{1},\dots ,x_{n}]$ sur l'anneau commutatif $\mathds{C}[\partial_{1},\dots ,\partial_{n}]$. Il est en particulier acyclique en degré $>0$. 
Si l'un des $a_{i}$ est non nul, mettons $a_{1}$ on peut supposer quitte à poser $y_{1}=a_{1}x_{1}+ \cdots + a_{n}x_{n}, y_{2}=x_{2}, \dots,  y_{n}=x_{n}$ que $a_{2}=\dots = a_{n}=0$. Dans ce cas, $\DR\mathcal{E}$ est le complexe de Koszul $\mathcal{K}(\partial_{1}+1, \partial_{2}, \dots ,\partial_{n})$ de $\mathds{C}[x_{1},\dots ,x_{n}]$. Or ce dernier complexe est quasi-isomorphe au cône du morphisme de complexe
\[\xymatrixcolsep{3pc}
\xymatrix{ 
\mathcal{K}(\partial_{2}, \dots ,\partial_{n}) \ar[r]^-{(\partial_{1}+1)\cdot}& \mathcal{K}(\partial_{2}, \dots ,\partial_{n})
}
\]
avec $\mathcal{K}(\partial_{2}, \dots ,\partial_{n})$ quasi-isomorphe à $\mathds{C}[x_{1}]$ placé en degré 0, d'où l'annulation voulue.
\end{proof}
\begin{corollaire}\label{déguisement}
Soit $\mathcal{E}$ un fibré vectoriel algébrique à connexion intégrable sur  $\mathds{A}^{n}_{\mathds{C}}$ muni des coordonnées $(x_{1},\dots , x_{n})$. On suppose l'existence d'une trivialisation globale de $\mathcal{E}$ sur laquelle les $\partial_{x_{i}}$ agissent via des matrices à coefficients constants. Alors $\mathcal{E}$  est  linéaire.
\end{corollaire}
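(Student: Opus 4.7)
Le plan est d'exploiter la condition d'intégrabilité pour se ramener à une famille de matrices constantes qui commutent deux à deux, de construire par trigonalisation simultanée une filtration de $\mathcal{E}$ dont les gradués sont des $\mathcal{E}^{\varphi_\lambda}$ linéaires, et d'appliquer enfin \ref{extension} pour scinder cette filtration.

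Plus précisément, notons $A_i \in \Mat_r(\mathds{C})$ la matrice constante de l'action de $\partial_{x_i}$ dans la trivialisation globale donnée. L'intégrabilité de la connexion, jointe à $[\partial_{x_i},\partial_{x_j}]=0$, impose $[A_i,A_j]=0$ pour tous $i,j$. Puisque les $A_i$ sont des endomorphismes d'un $\mathds{C}$-espace vectoriel de dimension finie qui commutent deux à deux, on dispose d'une décomposition en sous-espaces caractéristiques joints $\mathds{C}^{r} = \bigoplus_{\lambda \in \mathds{C}^n} V_{\lambda}$, chaque $V_\lambda$ étant stable sous tous les $A_i$. Cette décomposition induit un scindage $\mathcal{E} = \bigoplus_{\lambda} \mathcal{E}_\lambda$ en sous-$\mathcal{D}$-modules, ce qui ramène à traiter chaque $\mathcal{E}_\lambda$ séparément. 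Sur $V_\lambda$, l'opérateur $A_i$ s'écrit $A_i = \lambda_i \Id + N_i$ avec les $N_i$ nilpotents qui commutent.

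Par extraction itérative d'un vecteur non nul dans le noyau joint des $N_i$ (non vide car ils commutent et sont nilpotents sur un espace non nul), je construirais un drapeau complet $0 = W_0 \subsetneq W_1 \subsetneq \cdots \subsetneq W_d = V_\lambda$ de sous-espaces stables sous chaque $N_i$, et donc sous chaque $A_i$. Tensorisé par $\mathcal{O}_{\mathds{A}^n_\mathds{C}}$, ce drapeau fournit une filtration de $\mathcal{E}_\lambda$ par des sous-$\mathcal{D}$-modules dont les quotients successifs sont tous isomorphes à $\mathcal{E}^{\varphi_\lambda}$, où $\varphi_\lambda := \lambda_1 x_1 + \cdots + \lambda_n x_n$, puisque $N_i$ agit par $0$ sur chaque quotient $W_k/W_{k-1}$. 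Une application itérée de \ref{extension} scinde alors cette filtration et donne $\mathcal{E}_\lambda \simeq (\mathcal{E}^{\varphi_\lambda})^{\oplus \dim V_\lambda}$, d'où la linéarité de $\mathcal{E}$.

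Il n'y a pas d'obstacle conceptuel notable : le point principal est la réduction initiale à une famille d'endomorphismes qui commutent. Une fois celle-ci acquise, la décomposition en espaces caractéristiques joints, la construction du drapeau nilpotent et l'application du lemme d'annulation \ref{extension} sont standards, et le caractère \og globalement constant \fg{} de la trivialisation assure que tous les objets construits restent définis sur $\mathds{A}^{n}_{\mathds{C}}$ tout entier.
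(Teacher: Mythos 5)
Votre démonstration est correcte et suit essentiellement la même stratégie que celle du texte : l'intégrabilité donne des matrices constantes commutant deux à deux, une trigonalisation simultanée (que vous explicitez via la décomposition en espaces caractéristiques joints puis un drapeau stable sous les parties nilpotentes) fournit une filtration par des sous-modules à quotients successifs de rang $1$ du type $\mathcal{E}^{\varphi_\lambda}$, et le lemme \ref{extension} scinde cette filtration. Le texte procède simplement par récurrence sur le rang en extrayant à chaque étape un sous-objet linéaire de rang $1$, la décomposition préalable en blocs $V_\lambda$ étant un raffinement superflu mais inoffensif.
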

\begin{proof}
On raisonne par récurrence sur le rang de $\mathcal{E}$, le cas où $\mathcal{E}$ est de rang 1 étant trivial. Notons $\mathbf{e}$ une base ayant la propriété de l'énoncé. Par intégrabilité de $\mathcal{E}$, les matrices des $\partial_{x_{i}}$ commutent deux à deux, donc une base de triangularisation simultanée donne une suite exacte
$$
0 \longrightarrow \mathcal{E}_{1}\longrightarrow \mathcal{E}\longrightarrow \mathcal{E}_{2}\longrightarrow 0
$$
avec $\mathcal{E}_{1}$ linéaire de rang 1 et $\mathcal{E}_{2}$ satisfaisant aux conditions de l'énoncé. Par récurrence, les $\mathcal{E}_{i}$ sont linéaires et \ref{déguisement} se déduit de \ref{extension}.
\end{proof}
\begin{corollaire}\label{sousquotient}
Tout sous-quotient d'un module linéaire est linéaire.
\end{corollaire}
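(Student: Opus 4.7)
The strategy is a dévissage argument resting on Lemme \ref{extension}, together with the preliminary observation that every rank-one linear module $\mathcal{E}^{\varphi}$ is a \emph{simple} $\mathcal{D}_{\mathds{A}^{n}_{\mathds{C}}}$-module. Since tensoring by $\mathcal{E}^{-\varphi}$ is an autoequivalence of the category of $\mathcal{D}$-modules, this reduces to the simplicity of $\mathcal{O}_{\mathds{A}^{n}_{\mathds{C}}}$: any nonzero $\mathcal{D}$-stable ideal $I\subset\mathds{C}[x_{1},\dots,x_{n}]$ contains a nonzero constant, obtained from any element of minimal total degree after applying sufficiently many partial derivatives, and hence equals $\mathcal{O}_{\mathds{A}^{n}_{\mathds{C}}}$.

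Granting this, a linear module $\mathcal{M}=\bigoplus_{i}\mathcal{E}^{\varphi_{i}}$ is of finite length, and the uniqueness part of the Jordan--Hölder theorem identifies its composition factors with the multiset $\{\mathcal{E}^{\varphi_{i}}\}$. Consequently, any subquotient $\mathcal{N}$ of $\mathcal{M}$ has composition factors drawn from the same list; in particular every simple subobject of $\mathcal{N}$ is of the form $\mathcal{E}^{\varphi}$ for some linear form $\varphi$, and is therefore itself linear.

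The proof is completed by induction on the length of $\mathcal{N}$. If $\mathcal{N}=0$ there is nothing to show. Otherwise, pick a simple subobject $\mathcal{N}_{0}\subset\mathcal{N}$, which is linear by the previous paragraph, and consider
$$0\longrightarrow \mathcal{N}_{0}\longrightarrow\mathcal{N}\longrightarrow\mathcal{N}/\mathcal{N}_{0}\longrightarrow 0.$$
The quotient $\mathcal{N}/\mathcal{N}_{0}$ is again a subquotient of $\mathcal{M}$ but of strictly smaller length, hence is linear by the inductive hypothesis. Lemme \ref{extension} guarantees that $\Ext^{1}_{\mathcal{D}_{\mathds{A}^{n}_{\mathds{C}}}}(\mathcal{N}/\mathcal{N}_{0},\mathcal{N}_{0})=0$, so the sequence splits and $\mathcal{N}\simeq\mathcal{N}_{0}\oplus(\mathcal{N}/\mathcal{N}_{0})$ is linear.

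The only step that is not purely formal is the simplicity of $\mathcal{E}^{\varphi}$, but this is standard; everything else is a mechanical dévissage. A conceptually equivalent alternative would be to remark that $\mathcal{M}$ is semisimple as a $\mathcal{D}_{\mathds{A}^{n}_{\mathds{C}}}$-module and invoke the general fact that subquotients of a semisimple object are semisimple with simple constituents among those of the original, but the inductive argument above has the merit of making explicit use of the previously established Lemme \ref{extension}.
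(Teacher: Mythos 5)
Your argument is correct, but it follows a genuinely different route from the paper. You first prove that each rank-one module $\mathcal{E}^{\varphi}$ is a simple $\mathcal{D}_{\mathds{A}^{n}_{\mathds{C}}}$-module (by twisting down to $\mathcal{O}$ and differentiating a nonzero polynomial to a nonzero constant), so that a linear module is semisimple of finite length; Jordan--Hölder then forces every simple subobject of a subquotient $\mathcal{N}$ to be some $\mathcal{E}^{\varphi_i}$, and you conclude by induction on the length of $\mathcal{N}$, using Lemme~\ref{extension} to split the extension of $\mathcal{N}/\mathcal{N}_{0}$ by $\mathcal{N}_{0}$. The paper never invokes simplicity or Jordan--Hölder explicitly: it treats subobjects only (declaring quotients similar), first uses the characteristic variety to see that a submodule $\mathcal{M}\subset\mathcal{E}$ is an algebraic connection, then runs a double induction on $(\rg\mathcal{E},\rg\mathcal{M})$, handling the simple case via the projections $p_i:\mathcal{E}\to\mathcal{E}^{\varphi_i}$ (one of which restricts to an isomorphism on $\mathcal{M}$) and the non-simple case by passing to $(\mathcal{M}/\mathcal{N}^{\prime},\mathcal{E}/\mathcal{N}^{\prime})$ and applying Lemme~\ref{extension}. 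Your version buys uniform treatment of arbitrary subquotients and, as you note, once semisimplicity is on the table the appeal to Lemme~\ref{extension} is not even needed for this corollary; the cost is the extra (standard, but not purely formal) input of simplicity of $\mathcal{E}^{\varphi}$, whereas the paper stays within the rank-counting dévissage it set up for \ref{déguisement} — at the price of implicitly using facts (e.g.\ that a nonzero map from a simple $\mathcal{M}$ onto a submodule of $\mathcal{E}^{\varphi_i}$ is an isomorphism, and that $\mathcal{E}/\mathcal{N}^{\prime}$ is again linear) that your approach makes explicit or bypasses.
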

\begin{proof}
On va montrer \ref{sousquotient} pour les sous-objets, le cas des quotients étant similaire.
Soit $\mathcal{E}\simeq \mathcal{E}\simeq \mathcal{E}^{\varphi_{1}}\oplus \dots \oplus \mathcal{E}^{\varphi_{n}}$ un module linéaire, et 
soit $\mathcal{M}$ un sous-module de $\mathcal{E}$. On veut montrer que $\mathcal{M}$ est linéaire. Du fait de  $\Char(\mathcal{M})\subset \Char(\mathcal{E})=T^{\ast}_{\mathds{A}^{n}_{\mathds{C}}}\mathds{A}^{n}_{\mathds{C}}$, on a $\Char(\mathcal{M})=T^{\ast}_{\mathds{A}^{n}_{\mathds{C}}}\mathds{A}^{n}_{\mathds{C}}$, donc $\mathcal{M}$ est une connexion algébrique. \\ \indent
Raisonnons par récurrence en supposant \ref{sousquotient} acquis pour tous les couples $(\mathcal{M}^{\prime}, \mathcal{E}^{\prime})$ avec $\mathcal{M}^{\prime}$ sous-module de $\mathcal{E}^{\prime}$ satisfaisant à $\rg \mathcal{E}^{\prime}< \rg \mathcal{E}$ ou ($\rg \mathcal{E}^{\prime}=\rg \mathcal{E}$ et  $\rg \mathcal{M}^{\prime}<\rg \mathcal{M}$). \\ \indent 
Si $\mathcal{M}$ est simple et non nul, alors la restriction de l'un des $p_{i}:\mathcal{E} \longrightarrow \mathcal{E}^{\varphi_{i}}$ à $\mathcal{M}$ est un isomorphisme. Sinon, $\mathcal{M}$ admet un sous-objet propre $\mathcal{N}$. Par hypothèse de récurrence appliquée à $(\mathcal{N},\mathcal{E})$, le module $\mathcal{N}$ est linéaire. Soit $\mathcal{N}^{\prime}$ un facteur linéaire de rang $1$ de $\mathcal{N}$. Par hypothèse de récurrence appliquée à $(\mathcal{M}/\mathcal{N}^{\prime},\mathcal{E}/\mathcal{N}^{\prime})$, le module $\mathcal{M}$ est une extension de deux modules linéaires. D'après \ref{extension}, $\mathcal{M}$ est linéaire.
\end{proof}

Soit $E$ un fibré vectoriel sur une variété  algébrique complexe lisse $X$
et soit $x\in X$. Pour tout couple $(Y,Z)$ de sous-variétés de  $X$ tel que $Z\subset Y$, on note $i_{Z,Y}$ l'inclusion canonique de $E_Z$ dans $E_Y$.
Soit $\mathcal{M}$ un $\mathcal{D}$-module holonome sur $E$. 
\begin{definition}\label{linpon}
On dit que $\mathcal{M}$ est $\mathcal{H}^{0}$-\textit{linéaire en $x$} si le module $\mathcal{H}^{0}i^{+}_{x,X}\mathcal{M}$ est linéaire. On dit que $\mathcal{M}$ est $\mathcal{H}^{0}$-\textit{ponctuellement linéaire} si $\mathcal{M}$ est $\mathcal{H}^{0}$-linéaire en tout point de $X$.
\end{definition}
\subsection{La propriété $L$}
Soit $M$ un $\mathcal{D}$-module holonome sur un fibré $E$ de rang $l$ sur $\Spec \mathds{C}\llbracket t_{1}, \dots , t_{n}\rrbracket$, et soit $O$  le point fermé de $\Spec \mathds{C}\llbracket t_{1}, \dots , t_{n}\rrbracket$.
\begin{definition}\label{Lxdef}
On dit que $M$ \textit{vérifie la propriété $L$} si sur un voisinage de $E_O$ dans $E$, le module $M$ admet une famille génératrice $\mathbf{e}:=(e_{1},\dots , e_{m})$ vérifiant
\begin{equation}\label{Lx}
\partial_{y_{i}}e_{j}=\displaystyle{\sum_{u=1}^{m}}f_{iju}(t,y)e_{u}
\end{equation}
où dans cette écriture, les $y_{i}$ sont des coordonnées sur $E$, et les $f_{iju}$ sont de la forme $g_{iju}/h_{iju}$ avec $g_{iju},h_{iju} \in \mathds{C}\llbracket t_{1}, \dots , t_{n}\rrbracket[y_1,\dots ,y_l]$ ayant la propriété\footnote{Appelée par abus \textit{propriété $L$} dans la suite.} que dans la décomposition
$$
g_{iju}=\displaystyle{\sum_{\nu}}P_{\nu}(y_1,\dots ,y_l)t_1^{\nu_1}\cdots t_n^{\nu_n}
$$
le degré total $\deg_y P$ de $P_{\nu}(y_1,\dots ,y_l)$ est plus petit que  $\nu_1+\dots +\nu_n$, et de même pour les $h_{iju}$. \\ \indent
Dans la situation globale de \ref{linpon}, on dira aussi que $\mathcal{M}$ \textit{vérifie la propriété $L$ en $x$} si pour un choix d'identification $\hat{\mathcal{O}_{X,x}}\simeq \mathds{C}\llbracket t_{1}, \dots , t_{n}\rrbracket$, le changement de base de $\mathcal{M}$ à $\Spec \hat{\mathcal{O}_{X,x}}$ satisfait à la propriété $L$ au sens précédent.
\end{definition}

\begin{lemme}\label{Lximpliquelinéaire}
Si $\mathcal{M}$ a la propriété $L$ en $x$, alors $\mathcal{M}$ est $\mathcal{H}^{0}$-linéaire en $x$.
\end{lemme}
\begin{proof}
$\mathcal{H}^{0}i^{+}_{x,X}\mathcal{M}$ est la restriction à $E_x$ de $M=\hat{\mathcal{O}_{X,x}}\otimes_{\mathcal{O}_{X,x}}\mathcal{M}_{x}$. Une famille génératrice de $M$ vérifiant \eqref{Lx} induit une famille génératrice de $\mathcal{H}^{0}i^{+}_{x,X} M$ encore notée $\mathbf{e}$ sur laquelle l'action de $\partial_{y_i}$ s'obtient en évaluant les fonctions $f_{iju}$ en $t_1=\dots=t_n=0$. Par hypothèse, $f_{iju}(0,y)$ est constante. \\ \indent
Soit $\mathbf{e}^{\prime}$ une sous famille maximale de $\mathbf{e}$ n'admettant pas de relations de liaison non triviale à coefficients constants. Une telle famille existe si $M\neq 0$, et quitte à renuméroter les $e_i$, on peut supposer $\mathbf{e}^{\prime}=(e_1,\dots , e_k)$, $k\leq n$. Par maximalité, tous les $e_i$ sont dans le $\mathds{C}$-espace vectoriel engendré par $\mathbf{e}^{\prime}$. Donc $\mathbf{e}^{\prime}$ engendre $\mathcal{H}^{0}i^{+}_{x,X}M$ comme $\mathcal{D}_{E_{x}}$-module. En particulier, les relations
$$
\partial_{y_i}e_j=a_{ij1}e_1+\cdots + a_{ijn}e_n, \quad a_{iju}\in \mathds{C}
$$
pour $j=1,\dots, k$ donnent des relations
\begin{equation}\label{uneequation}
\partial_{y_i}e_j=b_{ij1}e_1+\cdots + b_{ijk}e_k, \quad b_{iju}\in \mathds{C}
\end{equation}
En notant $B_i$ la matrice des $(b_{iju})_{ju}$, les relations \eqref{uneequation} s'écrivent 
$
\partial_{y_i}\mathbf{e}^{\prime}=B_i\mathbf{e}^{\prime}
$.
Par application de $\partial_{y_j}$ et commutation de $\partial_{y_i}$  et $\partial_{y_j}$, on obtient
$$
(B_iB_j-B_jB_i)\mathbf{e}^{\prime}=0
$$
Par définition de $\mathbf{e}^{\prime}$, on a $B_iB_j=B_jB_i$. Alors, la connexion $$N:=(\mathds{C}[y_1,\dots ,y_l]^{k}, B_1dy_1+\cdots+B_ldy_l)$$
est une connexion algébrique intégrable bien définie se surjectant sur $\mathcal{H}^{0}i^{+}_{x,X} M$. d'après \ref{déguisement}, le module $N$ est linéaire. Par \ref{sousquotient}, le module $\mathcal{H}^{0}i^{+}_{x,X} M$ est linéaire.

\end{proof}
La définition \ref{Lxdef} s'introduit naturellement pour palier au mauvais comportement de la linéarité ponctuelle vis-à-vis des sous-objets. 
On démontrera en \ref{sousmodule} par l'entremise de la théorie des cycles proches que tout sous-objet d'un module vérifiant la propriété $L$ en $x$ est $\mathcal{H}^{0}$-linéaire en $x$.

\section{Le théorème principal}
\subsection{Enoncé}
On commence par quelques rappels sur \cite{clean}.  
Soit $X$ une variété algébrique complexe lisse, $D$ un diviseur lisse de $X$ et $U:=X\setminus D$. \\ \indent
Soit $\tilde{X\times X}$ l'éclaté de $X\times X$ le long de $D\times D$. On note $X\ast X$ le complémentaire dans  $\tilde{X\times X}$ des transformées strictes de $D\times X$ et $X\times D$. Si $V_{1}$ et $V_{2}$ sont deux ouverts de $X$ d'anneaux de fonctions respectifs $A_{1}$ et $A_{2}$, et si $t=0$ (resp. $s=0$) est l'équation de $D$ dans $V_{1}$ (resp. $V_{2}$), $X\ast X$ est le schéma d'anneau de fonctions \cite[5.22]{clean}
\begin{equation}\label{ast}
\frac{A_{1}\otimes_{\mathds{C}}A_{2}[u^{\pm1}]}{(t\otimes 1 -u(1\otimes s))}
\end{equation} 
Le schéma $X\ast X$ s'insère dans le diagramme commutatif
\begin{equation*} \label{carrecartesien}
\begin{array}{c}
\xymatrix{ 
                          &           X\ast X \ar[d] \\  
            X       \ar[r]_-{\text{Diag}} \ar[ru]^-{\delta}     &         X\times X           
}
\end{array}
\end{equation*}
Avec $\delta$ immersion fermée régulière de fibré conormal canoniquement isomorphe à $\Omega^{1}_{X}(\log D)$. 
Si $a\in \mathds{N}^{\ast}$ on s'intéresse comme en dimension 1 \cite[3]{AS} à la dilatation $(X\ast X)^{a}$ de $X\ast X$ en $\delta(aD)$ par rapport à $aD$. On a un diagramme commutatif de carré gauche cartésien
\begin{equation} \label{carrecartesien}
\begin{array}{c}
\xymatrix{ 
   T_{a} \ar[d]\ar[r]  &     (X\ast X)^{a}        \ar[d]_-{\pi}        &   \ar[l]_-{j_a}       U\times U \ar[d]\\  
      D             \ar[r]   &                 X                            &         U    \ar[l]       
}
\end{array}
\end{equation}
avec  \cite[(5.31.4)]{clean}
\begin{equation*} 
\begin{array}{c}
\xymatrix{ 
T_{\underline{a}} \ar[r]^-{\sim} &\mathbf{V}(\Omega^{1}_{X/\mathds{C}}(\log D)\otimes_{\mathcal{O}_{X}} \mathcal{O}_{X}(aD))
}\times_{X}D.
\end{array}
\end{equation*}
Dans la suite, on se restreint au cas où $X$ est l'espace affine $\mathds{A}^{n}_\mathds{C}$ avec $D$ un hyperplan, et on se donne une connexion méromorphe $\mathcal{M}$  sur  $\mathds{A}^{n}_\mathds{C}$ à pôles le long de $D$. On notera $\rho(\mathcal{M})$ où encore $\rho$ le rang de Poincaré-Katz de $\mathcal{M}$ au point générique de $D$ (c'est-à-dire la plus grande pente de la restriction de $\mathcal{M}$ au point générique de $D$),  et $H_{a}(\mathcal{M})$ pour $j_{a+}\mathcal{H}om(p^{+}_{2}\mathcal{M}, p^{+}_{1}\mathcal{M})$, où $j_{a}$ désigne l'inclusion canonique de $U\times U$ dans  $(X\ast X)^{a}$. \\ \indent
On dira qu'un module holonome $\mathcal{N}$ sur un fibré sur $D$ est \textit{ponctuellement lisse} si les modules de cohomologie du complexe $i^{+}_{x,X}\mathcal{N}$ sont des connexions algébriques pour tout $x\in D$. Notons $\psi_{\pi}$ le foncteur des cycles proches par rapport à $\pi$. Le but de cet article est de prouver le
\begin{theorem}\label{conj}
On suppose que $\rho(\mathcal{M})$ est entier non nul. Alors, si $a\geq \rho(\mathcal{M})$, le $\mathcal{D}_{T_a}$-module $\psi_{\pi} H_{a}(\mathcal{M})$ est ponctuellement lisse et ponctuellement $\mathcal{H}^{0}$-linéaire. 
\end{theorem}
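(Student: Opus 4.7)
Le plan est d'établir que $\psi_{\pi} H_{a}(\mathcal{M})$ satisfait en tout point fermé de $T_{a}$ à la propriété $L$ de \ref{Lxdef} et à sa variante \ref{Pxdef} contrôlant la lissité des $\mathcal{H}^{i}$ des restrictions aux fibres. Le lemme \ref{Lximpliquelinéaire} livrera alors la $\mathcal{H}^{0}$-linéarité ponctuelle, tandis que \ref{Pxdef} assurera la lissité ponctuelle. Comme la linéarité et la lissité ponctuelles ne passent pas aux sous-objets d'un module holonome, le détour par ces conditions plus fortes est essentiel : selon \ref{grmoins} et \ref{sousmodule}, $L$ et $P$ passent aux sous-objets via les cycles proches, ce qui est précisément le bon formalisme ici.

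Je commencerais par me placer au voisinage formel d'un point $x\in D$, en identifiant $\hat{\mathcal{O}_{X,x}}$ à $\mathds{C}\llbracket t_{1},\dots ,t_{n}\rrbracket$ avec $D=\{t_{1}=0\}$. Grâce à la description explicite \eqref{ast} de $X\ast X$ et à la construction du dilaté $(X\ast X)^{a}$, la restriction de $\pi$ à un voisinage de $T_{a,x}$ est un fibré de coordonnées $y_{1},\dots ,y_{l}$, et le module $H_{a}(\mathcal{M})=j_{a+}\mathcal{H}om(p_{2}^{+}\mathcal{M},p_{1}^{+}\mathcal{M})$ admet dans ces coordonnées une expression déterminée par les matrices de connexion de $\mathcal{M}$. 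Pour construire une présentation dont les matrices selon les $\partial_{y_{j}}$ vérifient la condition de degré de \ref{Lxdef}, je ferais appel aux réseaux canoniques de Malgrange \cite{Mal96} : ceux-ci palient à l'absence de décomposition de Levelt-Turrittin globale en dimension $>1$, et c'est le contenu de \ref{loctri} et \ref{malgrange} que d'en déduire, pour $a\geq \rho(\mathcal{M})$, une trivialisation locale adéquate de $\mathcal{M}$ sur un revêtement ramifié convenable de $X$ le long de $D$ dont on peut lire les conséquences sur $H_{a}(\mathcal{M})$. Un calcul direct de $\mathcal{H}om$ et de $j_{a+}$ montrera alors que $H_{a}(\mathcal{M})$ hérite des propriétés $L$ et $P$ au voisinage de la fibre générique de $\pi$ au-dessus de $x$.

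Il restera à appliquer $\psi_{\pi}$ et à invoquer \ref{grmoins} et \ref{sousmodule} pour transmettre $L$ et $P$ à $\psi_{\pi} H_{a}(\mathcal{M})$ ainsi qu'à ses sous-objets ; \ref{Lximpliquelinéaire} et l'analogue pour $P$ concluent. L'obstacle principal réside clairement dans l'étape des réseaux de Malgrange : en dimension $\geq 3$ ces réseaux ne sont pas localement libres, et il faudra donc raisonner avec des présentations cohérentes en contrôlant finement la dépendance des coefficients vis-à-vis de $t$, notamment au voisinage des strates tournantes où la théorie formelle n'offre pas d'objet canoniquement trivialisé. C'est l'articulation délicate entre la donnée \emph{générique} sur $D$ (rang de Poincaré--Katz entier, trivialisation sur un revêtement) et le contrôle \emph{ponctuel} requis pour établir $L$ et $P$ en un point arbitraire de $T_{a}$ qui constitue le cœur technique de la preuve.
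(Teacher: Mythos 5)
Votre plan reprend pour l'essentiel la démonstration du texte : établir la propriété $L$ (donc aussi $P$) pour $H_{a}(\mathcal{M})$ en tout point de $D$ à l'aide des coordonnées explicites du dilaté et du réseau de Malgrange (\ref{loctri} et \ref{malgrange}), puis transférer linéarité et lissité à $\psi_{\pi}H_{a}(\mathcal{M})$ par \ref{grmoins}, \ref{sousmodule}, \ref{encoregrmoins} et \ref{encoresousmodule}. Seule mise au point : ce que fournit \ref{malgrange} n'est pas une trivialisation de $\mathcal{M}$ sur un revêtement ramifié (une telle trivialisation n'existe pas aux points tournants, qui sont précisément le cas intéressant) mais un sous-faisceau cohérent générateur stable par les $x_i^{\rho+\delta_{in}}\partial_{x_i}$, la condition de degré de \ref{Lxdef} résultant ensuite, dans \ref{loctri}, de la substitution $x_i\mapsto t_i+y_it_n^{a}$ avec $a\geq\rho\geq 1$ ; de même, $L$ ne se transmet pas à $\psi_{\pi}H_{a}(\mathcal{M})$ lui-même, ses gradués n'étant que des sous-modules de modules vérifiant $L$, d'où le recours à \ref{sousmodule} plutôt qu'à \ref{Lximpliquelinéaire} directement.
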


\subsection{}
Abbes et Saito utilisent le foncteur de restriction plutôt que les cycles proches. Le choix des cycles proches se justifie ici par le fait
que la restriction à une sous-variété $Z$ de n'importe quel $\mathcal{D}$-module localisé le long de $Z$ donne $0$, alors que $\psi$ est insensible à la localisation \cite[4.4-3]{MM}. 
\subsection{}
Dans \cite[2.3.7]{Saito} et \cite[8.15]{clean}, l'analogue   $\ell$-adique de \ref{conj} est démontré à l'aide d'une hypothèse \cite[8.2]{clean} sur la ramification le long de $D$. Soit $X$ une variété sur un corps parfait $k$ de caractéristique $p>0$, et $D$ un diviseur lisse de $X$. On considère le diagramme cartésien
\begin{equation} \label{dcartesien}
\begin{array}{c}
\xymatrix{ 
          U      \ar[d]\ar[r]^-{\delta}                    &           U\times_{k} U \ar[d]^{j_{a}} \\  
            X      \ar[r]_-{\delta_{a}}  &                  (X\ast X)^{a}           
}
\end{array}
\end{equation}
Soit $\ell\neq p$ un nombre premier,  $x \in D$,  $\overline{x}$ un point géométrique de $X$ localisé en $x$, et $\mathcal{F}$ un $\mathds{F}_{\ell}$-faisceau localement constant constructible sur $U$. Alors, le changement de base relatif à \eqref{dcartesien}
$$
\alpha : \delta_{a}^{\ast}j_{a\ast}H_{a}(\mathcal{F})\longrightarrow j_{\ast}\delta^{\ast}H_{a}(\mathcal{F})=j_{\ast}\End(\mathcal{F})
$$
est injectif, et on dit que \textit{la ramification de $\mathcal{F}$ en $\overline{x}$ est bornée par $aD$} si  $\alpha_{\overline{x}}$ est un isomorphisme. Sous cette condition, l'objet d'étude d'Abbes et Saito est le faisceau $(j_{a\ast}\mathcal{H}om(p^{\ast}_{2}\mathcal{F}, p^{\ast}_{1}\mathcal{F}))_{|T_{a}}$. \\ \indent
Pour les connexions méromorphes, $\alpha$ est  un isomorphisme \cite[1.7.3]{HTT}, et il se pose alors la question de savoir par quoi remplacer la condition d'Abbes et Saito. Si on choisit pour $x$ le point générique $\eta$ de $D$, elle équivaut au fait que la plus grande pente de la restriction de $\mathcal{F}$ au point générique de $\Spec \mathcal{O}_{X,\eta}^{sh}$ est $\leq a$. Voir \cite[8.8]{clean}. Cette condition fait sens pour les $\mathcal{D}$-modules, d'où l'occurrence du rang de Katz générique dans \ref{conj}. \\ \indent
Le théorème \ref{conj} suggère que dans le contexte $\ell$-adique, on doit pouvoir obtenir un théorème d'additivité à l'aide d'une condition portant seulement sur le point générique de $D$.

\section{Cycles proches}
\subsection{Rappels et notations}
Dans cette section, on fixe une variété complexe lisse $X$, une hypersurface lisse $D$ de $X$ définie par une équation $t=0$  et $\mathcal{M}$ un $\mathcal{D}_{X}$-module holonome. On rappelle suivant \cite{MM} que $\mathcal{D}_{X}$ est muni de la \textit{filtration de Kashiwara-Malgrange}
$$
V_{k}(\mathcal{D}_{X}):=\{P \in \mathcal{D}_{X},  P(\mathcal{I}_{D}^{l})\subset \mathcal{I}_{D}^{l-k} \quad \forall l \in \mathds{Z}\}
$$
et qu'on appelle \textit{bonne $V$-filtration} de $\mathcal{M}$ toute filtration exhaustive $(U_{k}(\mathcal{M}))_{k\in \mathds{Z}}$ par des $V_{0}(\mathcal{D}_{X})$-modules cohérents tels que localement, il existe un entier $k_{0}\in \mathds{N}$ vérifiant pour tout $k\in \mathds{N}$
\begin{equation}\label{bonnefiltration}
U_{-k-k_{0}}(\mathcal{M})=t^{k}U_{-k_{0}}(\mathcal{M}) \quad \text{ et } \quad U_{k+k_{0}}(\mathcal{M})= \displaystyle{\sum_{i=0}^{k}}\partial_{t}^{i}U_{k_{0}}(\mathcal{M})
\end{equation}
Si $m$ est une section de $\mathcal{M}$, il existe un polynôme de degré minimal $b_{m}$ pour lequel on a 
$$
b_{m}(t\partial_{t})m\in V_{-1}(\mathcal{D}_{X})m.
$$
C'est le \textit{polynôme de Bernstein} de $m$. Notons $\ord_{Y}(m)$ l'ensemble de ses racines et soit $\geq$ l'ordre lexicographique sur $\mathds{C}\simeq \mathds{R}+ i\mathds{R}$. Pour $a \in \mathds{C}$, si on définit 
$$
V_{a}(\mathcal{M})=\{m \in \mathcal{M},  \ord_{Y}(m) \subset \{ \alpha \in \mathds{C}, \alpha \geq -a-1 \}\}
$$
et 
$$
V_{<a}(\mathcal{M})=\{m \in \mathcal{M},  \ord_{Y}(m) \subset \{ \alpha \in \mathds{C}, \alpha > -a-1 \}\},
$$
Alors $(V_{a+k}(\mathcal{M}))_{k\in \mathds{Z}}$ et $(V_{<a+k}(\mathcal{M}))_{k\in \mathds{Z}}$ sont des bonnes $V$-filtrations de $\mathcal{M}$, et ce sont les seules bonnes $V$-filtrations de $\mathcal{M}$ dont les racines du polynôme de Bernstein sont respectivement dans $[-a-1,-a[$ et $]-a-1,-a]$. Si on pose $\Gr_{a}(\mathcal{M})=V_{a}(\mathcal{M})/V_{<a}(\mathcal{M})$, on a par définition
$$
\Psi_{t}\mathcal{M}:= \displaystyle{\bigoplus_{-1 \leq a<0}}\Gr_{a}(\mathcal{M}).
$$

\subsection{Généralités sur les $V$-filtrations}
\begin{lemme}\label{Vfiltration}
Suposons que $\mathcal{M}$ est localisé le long de $D$, à savoir $\mathcal{M}\simeq \mathcal{M}[t^{-1}]$. Soit $U_{0}(\mathcal{M})$ un $V_{0}(\mathcal{D}_{X})$ sous-module cohérent de $\mathcal{M}$ tel que $\mathcal{M}=U_{0}(\mathcal{M})[t^{-1}]$. Alors, la filtration de $\mathcal{M}$ définie par 
$$
U_{k}(\mathcal{M})=t^{-k}U_{0}(\mathcal{M}) \quad (k \in \mathds{Z})
$$
est une bonne $V$-filtration de $\mathcal{M}$.
\end{lemme}
\begin{proof}
Du fait de $(t\partial_{t})t^{-k}=-kt^{-k}+t^{-k}(t\partial_{t})$, chaque $U_{k}(\mathcal{M})$ est un $V_{0}(\mathcal{D}_{X})$-module cohérent. Il faut donc vérifier que les relations de \eqref{bonnefiltration} sont satisfaites pour un certain $k_{0}\geq 0$. Par définition, la première relation de \eqref{bonnefiltration} est vérifiée pour tout choix de $k_{0}$. \\ \indent
Montrer la seconde relation revient à montrer
\begin{equation}\label{bonneV}
U_{k+k_{0}}(\mathcal{M})=\displaystyle{t^{k}\sum_{i=0}^{k}}\partial_{t}^{i}U_{k_{0}}(\mathcal{M})
\end{equation}
pour un $k_{0}$ convenable. Du fait de $\partial_{t}t^{-k}=-kt^{-k-1}+t^{-k-1}t\partial_{t}$, la stabilité de $U_{0}(\mathcal{M})$ par $t\partial_{t}$ entraine $\partial_{t} U_{k}(\mathcal{M}) \subset U_{k+1}(\mathcal{M})$  pour tout $k\in \mathds{Z}$. L'inclusion
$\supset$ dans \eqref{bonneV} est donc automatique. Il faut montrer l'inclusion $\subset$ pour un $k_{0}$ bien choisi. \\ \indent
Si $e_{1},\dots , e_{n}$ est un système de $V_{0}(\mathcal{D}_{X})$-générateurs de $U_{0}(\mathcal{M})$, on observe que le polynôme $b(T)=b_{e_{1}}(T)\cdots b_{e_{n}}(T)$ annule l'opérateur induit par $t\partial_{t}$ sur $\Gr_{0}U(\mathcal{M})= U_{0}(\mathcal{M})/U_{-1}(\mathcal{M})$. Par conséquent, le polynôme $b(T+k)$ annule l'opérateur induit par $t\partial_{t}$ sur $\Gr_{k}U(\mathcal{M})= U_{k}(\mathcal{M})/U_{k-1}(\mathcal{M})$ pour tout $k\in \mathds{Z}$. Donc si $A$ est l'ensemble fini des valeurs propres de l'action de $t\partial_{t}$ sur $\Gr_{0}U(\mathcal{M})$, les valeurs propres de l'action de $t\partial_{t}$ sur $\Gr_{k}U(\mathcal{M})$ sont dans $A-k$. Choisissons $k_{0}$ assez grand  tel que $A-k_{0}$ ne rencontre par $\mathds{N}$, et montrons  
\eqref{bonneV} pour $k=k_{0}$. On raisonne par récurrence sur $k$, le cas $k=0$ étant tautologique. Par récurrence, il suffit de montrer 
$$
U_{k_{0}}(\mathcal{M})\subset U_{k_{0}-1}(\mathcal{M})+t^{k}\partial_{t}^{k}U_{k_{0}}(\mathcal{M}),
$$
soit encore que le morphisme $T_{k}: \Gr_{k_{0}}U(\mathcal{M}) \longrightarrow \Gr_{k_{0}}U(\mathcal{M})$ induit par l'opérateur $t^{k}\partial_{t}^{k}$ est surjectif. Or, une récurrence permet de voir que 
$$
t^{k}\partial_{t}^{k}=t\partial_{t}(t\partial_{t}-1)\cdots (t\partial_{t}-(k-1)), 
$$
de sorte que $T_{k}$ admet un polynôme minimal non nul $\mu_{T_{k}}=X^{d}+a_{d-1}X^{d-1}+\dots +a_{0}$ dont les racines sont de la forme $\lambda (\lambda-1)\cdots (\lambda-(k-1))$ pour $\lambda \in A-k_{0}$. Ces racines sont donc non nulles, soit encore $a_{0}\neq 0$. On a donc pour tout $m \in U_{k_{0}}(\mathcal{M})$ l'égalité suivante dans $\Gr_{k_{0}}U(\mathcal{M})$
$$
[m]=-(T_{k}^{d}[m]+a_{d-1}T_{k}^{d-1}[m]+\dots +a_{1}T_{k}[m])/a_{0}
$$
et la surjectivité souhaitée est prouvée.
\end{proof}
On aura besoin du lemme \cite[4.2-1]{MM}
\begin{lemme}\label{2inclusions}
Soient $U$ et $U^{\prime}$ deux bonnes $V$-filtrations de $\mathcal{M}$. Alors, il existe localement des entiers $k_{1}, k_{2}\in \mathds{Z}$ tels que
$$
U_{k+k_{1}} \subset U^{\prime}_{k}  \subset U_{k+k_{2}} \quad \forall k \in \mathds{Z}
$$
\end{lemme}

\subsection{Cycles proches et propriété $L$}\label{cpetLx}
Dans le lemme qui suit, $E$ désigne le fibré trivial de rang $m$ sur $X=\Spec\mathds{C}\llbracket t_{1}, \dots , t_{n}\rrbracket$, muni de coordonnées  $y_1,\dots ,y_m$, $O$ le point fermé de $X$ et $M$ désigne un module holonome sur $E$. Soit $D$ défini par $t_n=0$ et $E_D$ la restriction de $E$ à $D$. On munit $M$ de la $V$-filtration associée à $E_D$.
\begin{lemme}\label{grmoins}
Si $M$ vérifie la propriété $L$ et si $N$ est un sous-module de $M$, alors pour tout nombre complexe $a<0$, le gradué $\Gr_{a}(N)$ est un sous-module d'un module vérifiant $L$.
\end{lemme}
\begin{proof}
Par exactitude des foncteurs $\Gr_{a}$, on peut supposer que $N=M$ \cite[4.2-7]{MM}. Si $\mathbf{e}=(e_{1},\dots ,e_{m})$ est une famille génératrice de $M$ donnée par propriété $L$ de $M$, la suite des modules\footnote{On note de façon abusive $e_j$ pour l'image de $e_j$ par $M\longrightarrow M[t^{-1}_n]$. }  $M_{k}:=\mathcal{D}_{E}t^{-k}_n\mathbf{e}, k\in \mathds{N}$ est une suite croissante exhaustive de sous-modules de $M[t^{-1}_n]$. Par noethérianité de $M[t^{-1}_n]$ \cite[V 1.9]{Borel}, $(M_{k})_{k\in \mathds{N}}$ stationne à partir d'un certain entier $k_{0}$. Alors, la famille des $t^{-k_{0}}_n e_{j}$ fait de $M[t^{-1}_n]$ un module vérifiant la propriété $L$. Puisque par \cite[4.4-3]{MM} on a $\Gr_{a}(M)=\Gr_{a}(M[t^{-1}_n])$, on peut donc supposer $M=M[t^{-1}_n]$. \\ \indent
Soit $\mathbf{e}:=(e_{1},\dots , e_{m})$ une famille génératrice de $M$ vérifiant \eqref{Lx} et notons $V(\mathbf{e})$ le $V_{0}(\mathcal{D}_{E})$-module engendré par $\mathbf{e}$. Si $s\in M$, on peut écrire
\begin{align*}
s& =\sum P_{i}(t_{j},y_{j},\partial_{t_{1}},\dots ,\partial_{t_{n-1}}, \partial_{t_n}, \partial_{y_{j}})e_{i} \\
 & =\sum P_{i}(t_{j},y_{j},\partial_{t_{1}},\dots ,\partial_{t_{n-1}},t^{-1}_n(t_n\partial_{t_n}), \partial_{y_{j}})e_{i}
\end{align*}
où les $P_i$ désignent des polynômes à plusieurs variables et à coefficients complexes. On en déduit que $V(\mathbf{e})[t^{-1}_n]=M$, et alors par \ref{Vfiltration}
$$
V_{k}(\mathbf{e}):=t^{-k}_nV(\mathbf{e})
$$ 
est une bonne $V$-filtration de $M$. \\ \indent
D'après \ref{2inclusions}, on peut trouver $k_{0}\in  \mathds{N}$ tel que
$$
V_{-k_{0}}(\mathbf{e})\subset V_{<a}(M) \subset V_{a}(M) \subset V_{k_{0}}(\mathbf{e})
$$
d'où une surjection canonique de  $V_{0}(\mathcal{D}_{E})$-modules
\begin{equation}\label{surj}
\xymatrix{
V_{k_{0}}(\mathbf{e})/V_{-k_{0}}(\mathbf{e})\ar@{->>}[r]
& V_{k_{0}}(\mathbf{e})/ V_{<a}(M)
}
\end{equation}
et une injection canonique de $V_{0}(\mathcal{D}_{E})$-modules
\begin{equation}\label{Grincluseipe}
\xymatrix{
\Gr_{a}(M)\ar@{^{(}->}[r]
& V_{k_{0}}(\mathbf{e})/ V_{<a}(M)
}
\end{equation}
Pour prouver \ref{grmoins}, il suffit d'après \eqref{Grincluseipe}  de voir que par restriction des scalaires de $V_{0}(\mathcal{D}_{E})$ à $\mathcal{D}_{E_D}$, le $\mathcal{D}_{E_D}$-module induit par $V_{k_{0}}(\mathbf{e})/ V_{<a}(M)$ vérifie la propriété $L$. 
Par \eqref{surj}, il suffit de voir que le $\mathcal{D}_{E_D}$-module induit par
$$
V(\mathbf{e},k_{0}):=V_{k_{0}}(\mathbf{e})/V_{-k_{0}}(\mathbf{e})
$$
vérifie la propriété $L$. \\ \indent
Puisque pour tout $k \in \mathds{Z}$ et tout $\alpha\in \mathds{N}$, l'opérateur $t^{k}_n(t_n\partial_{t_n})^{\alpha}$ s'écrit comme combinaison linéaire à coefficients entiers des $(t_n\partial_{t_n})^{i}t^{k}_n, i=0, \dots ,\alpha$, les classes $[t^{k}_ne_{i}], k = -k_{0}, \dots , k_{0} $ forment une famille $V_{0}(\mathcal{D}_{E})$-génératrice finie de $V(\mathbf{e},k_{0})$. Puisque l'action de $t_n\partial_{t_n}$ sur $V(\mathbf{e},k_{0})$ admet un polynôme minimal non nul, disons de degré $d>0$, la famille des $[(t_n\partial_{t_n})^{\alpha}t^{k}_ne_{i}]$ pour $k = -k_{0}, \dots , k_{0}$ et $\alpha = 0, \dots , d-1$ engendre $V(\mathbf{e},k_{0})$ comme $\mathcal{D}_{E_D}$-module. \\ \indent
Montrons que les $[(t_n\partial_{t_n})^{\alpha}t^{k}_ne_{i}]$ satisfont à une relation du type \eqref{Lx}. On écrit
$$
\partial_{y_{i}}e_{j}=\displaystyle{\sum_{u=1}^{m}}f_{iju}(t, y)e_{u}
$$
avec les $f_{iju}$ comme en \ref{Lxdef}. On a
$$
\partial_{y_i}[(t_n\partial_{t_n})^{\alpha}t^{k}_ne_{i}] =
[(t_n\partial_{t_n})^{\alpha}t^{k}_n\partial_{y_a}e_{i}] 
 = \displaystyle{\sum_{u=1}^{m}} [(t_n\partial_{t_n})^{\alpha}t^{k}_nf_{iju}(t, y)e_{u}]
$$
et on est ramené à voir que si $f(t,y)$ est quotient de deux séries vérifiant la propriété $L$, alors il en est de même de $t_n\partial_{t_n}f(t, y)$. Cela découle du fait que le sous-espace de $\mathds{C}\llbracket t_{1}, \dots , t_{n}\rrbracket[y_1,\dots ,y_l]$ des fonctions satisfaisant à la propriété $L$ est une $\mathds{C}$-algèbre stable par $t_n\partial_{t_n}$. Le lemme  \ref{grmoins} est donc acquis.
\end{proof} 

\begin{corollaire}\label{sousmodule}
Tout sous-module $N$ d'un $\mathcal{D}_{E}$-module holonome $M$ vérifiant la propriété $L$ est $\mathcal{H}^{0}$-linéaire.
\end{corollaire}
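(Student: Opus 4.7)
Je propose de raisonner par récurrence sur $n$, le nombre de variables formelles sur la base.

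Le cas $n=0$ est immédiat : la base $X$ se réduit au point $O$, de sorte que $\mathcal{H}^{0}i^{+}_{O,X}N=N$, et la propriété $L$ force les coefficients $f_{iju}$ à être constants (seul le terme $\nu=0$ est présent, avec $\deg_{y}P_{0}\leq 0$). Par \ref{déguisement}, $M$ est alors linéaire, puis $N$ l'est par \ref{sousquotient}.

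Supposons $n\geq 1$ et le résultat acquis en dimension $n-1$. Posons $D=\{t_n=0\}\simeq \Spec \mathds{C}\llbracket t_1,\dots ,t_{n-1}\rrbracket$. L'ingrédient central est \ref{grmoins} : pour chaque $-1\leq a<0$, le gradué $\Gr_{a}(N)$ est un sous-module d'un $\mathcal{D}_{E_D}$-module vérifiant la propriété $L$ relative à $n-1$ variables. L'hypothèse de récurrence s'applique à chacun et entraîne la linéarité de $\mathcal{H}^{0}i^{+}_{O,D}\Gr_{a}(N)$; par somme directe finie, $\mathcal{H}^{0}i^{+}_{O,D}\Psi_{t_n}(N)$ est linéaire.

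Il resterait alors à relier $\mathcal{H}^{0}i^{+}_{O,X}N$ à $\mathcal{H}^{0}i^{+}_{O,D}\Psi_{t_n}(N)$. Mon plan serait d'exploiter la suite spectrale de Grothendieck associée à la composition $i^{+}_{O,X}=i^{+}_{O,D}\circ i^{+}_{D,X}$, conjointement au triangle distingué reliant les cohomologies de $i^{+}_{D,X}$ aux cycles proches et évanescents. Éventuellement couplé à un traitement du gradué $\Gr_{0}(N)$ par une torsion convenable permettant de se ramener au champ d'application $a<0$ de \ref{grmoins}, cela exprimerait $\mathcal{H}^{0}i^{+}_{O,X}N$ comme sous-quotient d'un module linéaire, et \ref{sousquotient} clôturerait l'argument.

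Le principal obstacle sera précisément ce dernier raccord. Du fait de la non-exactitude à gauche de $i^{+}$, un sous-module $N$ de $M$ peut faire apparaître dans $\mathcal{H}^{0}i^{+}_{O,X}N$ des contributions absentes de $\mathcal{H}^{0}i^{+}_{O,X}M$; il faudra montrer que ces contributions supplémentaires sont toutes capturées par les $\Gr_{a}(N)$ pour $a<0$. C'est précisément la motivation de la propriété $L$ comme notion de sous-structure bien comportée vis-à-vis des gradués, telle qu'établie en \ref{grmoins}.
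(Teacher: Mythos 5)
Votre canevas général (récurrence sur $n$, utilisation de \ref{grmoins} et de l'hypothèse de récurrence sur les gradués, conclusion par \ref{sousquotient}) est bien celui de la démonstration du texte, mais l'étape décisive — relier $\mathcal{H}^{0}i^{+}_{O,X}N$ aux gradués — n'est pas menée à bien : vous la laissez à l'état de plan (suite spectrale de Grothendieck, triangle cycles proches/évanescents, torsion de $\Gr_{0}(N)$ pour le ramener au champ $a<0$), et vous reconnaissez vous-même qu'elle constitue le principal obstacle. En l'état, la preuve est donc incomplète précisément là où réside le contenu du corollaire. Signalons aussi une imprécision dans le cas $n=0$ : \ref{déguisement} s'applique à un fibré muni d'une trivialisation globale sur laquelle les dérivations agissent par matrices constantes, alors que la propriété $L$ ne fournit qu'une famille génératrice, éventuellement liée ; la bonne référence est \ref{Lximpliquelinéaire}, dont l'argument extrait une sous-famille maximale sans relation à coefficients constants, en déduit la commutation des matrices, puis conclut par \ref{déguisement} et \ref{sousquotient}.

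Le raccord manquant se fait en réalité sans suite spectrale, sans cycles évanescents et sans toucher à $\Gr_{0}(N)$. D'après \cite[4.4-4]{MM}, le complexe $i^{+}_{D,X}N$ est représenté par le complexe à deux termes $\Gr_{0}(N)\xrightarrow{t_{n}}\Gr_{-1}(N)$ ; en particulier $\mathcal{H}^{0}i^{+}_{D,X}N$ est un quotient de $\Gr_{-1}(N)$ seul. Comme $\mathcal{H}^{0}i^{+}_{O,D}$ est exact à droite et que $\mathcal{H}^{0}i^{+}_{O,D}\mathcal{H}^{0}i^{+}_{D,X}N\simeq \mathcal{H}^{0}i^{+}_{O,X}N$, le module $\mathcal{H}^{0}i^{+}_{O,X}N$ est un quotient de $\mathcal{H}^{0}i^{+}_{O,D}\Gr_{-1}(N)$, lequel est linéaire par votre hypothèse de récurrence (seul le cas $a=-1$ de \ref{grmoins} est nécessaire, et non toute la somme sur $-1\leq a<0$) ; \ref{sousquotient} permet alors de conclure. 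Les contributions supplémentaires que vous redoutez dans $\mathcal{H}^{0}i^{+}_{O,X}N$, dues à la non-exactitude à gauche de $i^{+}$, sont ainsi automatiquement capturées par $\Gr_{-1}(N)$ via cette présentation en quotient : c'est exactement ce que la propriété $L$, stable par passage aux gradués d'un sous-module d'après \ref{grmoins}, a été conçue pour garantir.
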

\begin{proof}
On raisonne par récurrence sur $n$.
Le cas $n=0$ découle de \ref{sousquotient} et \ref{Lximpliquelinéaire}. 
Supposons $n>0$. On dispose d'après \ref{grmoins} d'un hyperplan $i_{D,X}:D \hookrightarrow X$ tel que $\Gr_{-1}(N)$ est inclus dans un module vérifiant $L$. Par hypothèse de récurrence, $\Gr_{-1}(N)$ est $\mathcal{H}^{0}$-linéaire. \\ \indent
Or on sait par \cite[4.4-4]{MM} que $\mathcal{H}^{0}i^{+}_{D,X}N$ est un quotient de $\Gr_{-1}(N)$. Par exactitude à droite de $i^{+}_{O,D}$, on en déduit que $\mathcal{H}^{0}i^{+}_{O,D}\mathcal{H}^{0}i^{+}_{D,X}N\simeq \mathcal{H}^{0}i^{+}_{O,X}N$ est un quotient de $\mathcal{H}^{0}i^{+}_{O,D}\Gr_{-1}(N)$ et on conclut à l'aide de \ref{sousquotient}.

\end{proof} 

\subsection{Cycles proches et propriété $P(x)$}\label{cpetPx}
On adopte les notations de \ref{cpetLx}.
\begin{definition}\label{Pxdef}
On dira que $M$ vérifie la \textit{propriété $P$} si sur un voisinage de $E_O$ dans $E$, le module $M$ admet une famille génératrice $\mathbf{e}:=(e_{1},\dots , e_{m})$ vérifiant
\begin{equation}\label{Px}
\partial_{y_{i}}e_{j}=\displaystyle{\sum_{u=1}^{m}}f_{iju}(t, y)e_{u}
\end{equation}
où les $f_{iju}$ sont des fonctions définies sur un voisinage de $E_O$ dans $E$.
\end{definition}
Bien sûr, si $M$  vérifie la propriété $L$, alors $M$ vérifie aussi la propriété $P$. On a comme en \ref{cpetLx} le 
\begin{lemme}\label{encoregrmoins}
Soit $N$ un sous-module d'un module $M$ vérifiant $P$. Alors pour tout nombre complexe $a$, le gradué $\Gr_{a}(N)$ est inclus dans un module vérifiant $P$.
\end{lemme}
\begin{proof}
Par exactitude de $\Gr_{a}$, on peut supposer $N=M$. Soit $\mathbf{e}=(e_{1}, \dots , e_{n})$ une famille génératrice locale de $\mathcal{M}$ donnée par \ref{Pxdef}. On définit une bonne $V$-filtration par
$$
V_{k}(\mathbf{e}):=V_{k}(\mathcal{D}_{E})\mathbf{e}
$$ 
D'après \ref{2inclusions}, on peut trouver un entier $k_{0}\in  \mathds{N}$ tel que
$$
V_{-k_{0}}(\mathbf{e})\subset V_{<a}(M) \subset V_{a}(M) \subset V_{k_{0}}(\mathbf{e})
$$
d'où une surjection canonique de  $V_{0}(\mathcal{D}_{E})$-modules
$$
\xymatrix{
V_{k_{0}}(\mathbf{e})/V_{-k_{0}}(\mathbf{e})\ar@{->>}[r]
& V_{k_{0}}(\mathbf{e})/ V_{<a}(M)
}
$$
et une injection canonique de $V_{0}(\mathcal{D}_{E})$-modules
$$
\xymatrix{
\Gr_{a}(M)\ar@{^{(}->}[r]
& V_{k_{0}}(\mathbf{e})/ V_{<a}(M)
}
$$
et on obtient \ref{encoregrmoins} en considérant comme en \ref{grmoins} les $\partial_{t}^{i}(t\partial_{t})^{j}e_{k}$ et les $t^{i^{\prime}}(t\partial_{t})^{j}e_{k}$ pour $i,i^{\prime}=0,\dots ,k_{0}$ et $j$ assez grand. 
\end{proof}
\begin{corollaire}\label{encoresousmodule}
Tout sous-module $N$ d'un module vérifiant la propriété $P$ est ponctuellement lisse en $O$.
\end{corollaire}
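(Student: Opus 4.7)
On suit le canevas de \ref{sousmodule}, en substituant \ref{encoregrmoins} à \ref{grmoins} et en devant cette fois contrôler \emph{toutes} les cohomologies $\mathcal{H}^{i} i^{+}_{O,X} N$, et non seulement $\mathcal{H}^{0}$. La preuve se ferait par récurrence sur $n$. Pour $n=0$, la propriété $P$ entraîne que $M$ est $\mathcal{O}_{E_{O}}$-cohérent, donc est une connexion algébrique, et tout sous-module le devient aussi par l'argument de variété caractéristique de \ref{sousquotient}.

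Pour $n>0$, on choisirait un hyperplan $D\subset X$. L'observation préliminaire clef serait que la propriété $P$ est stable par passage au quotient : les mêmes générateurs et les mêmes relations \eqref{Px} se transportent. Par \ref{encoregrmoins}, tout $\Gr_{a}(N)$ pour la $V$-filtration le long de $E_{D}$ est sous-module d'un module vérifiant $P$ sur $E_{D}$. Invoquant \cite[4.4-4]{MM}, chaque $\mathcal{H}^{q} i^{+}_{D,X} N$ pour $q\in\{-1,0\}$ serait sous-quotient d'un $\Gr_{a}(N)$ ; la stabilité par quotient de $P$ permettrait alors de le voir à son tour comme sous-module d'un module vérifiant $P$ sur $E_{D}$. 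L'hypothèse de récurrence appliquée à $D$ donnerait que $\mathcal{H}^{q} i^{+}_{D,X} N$ est ponctuellement lisse en $O$.

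Pour conclure, la factorisation $i^{+}_{O,X} = i^{+}_{O,D} \circ i^{+}_{D,X}$ fournirait la suite spectrale
\begin{equation*}
E_{2}^{p,q} = \mathcal{H}^{p} i^{+}_{O,D}\mathcal{H}^{q} i^{+}_{D,X} N \Longrightarrow \mathcal{H}^{p+q} i^{+}_{O,X} N.
\end{equation*}
Les termes $E_{2}^{p,q}$ seraient des connexions algébriques sur $E_{O}$ par ce qui précède ; chaque $\mathcal{H}^{p+q} i^{+}_{O,X} N$, sous-quotient de ces termes, aurait sa variété caractéristique incluse dans la section nulle, donc serait une connexion algébrique en vertu de \ref{sousquotient}. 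L'étape la plus délicate me semble être l'identification explicite de $\mathcal{H}^{-1} i^{+}_{D,X} N$ comme sous-objet d'un gradué, la preuve de \ref{sousmodule} n'invoquant que la description, plus classique, de $\mathcal{H}^{0}$ comme quotient de $\Gr_{-1}(N)$. La formule usuelle de Kashiwara-Malgrange identifie cependant $\mathcal{H}^{-1} i^{+}_{D,X} N$ au noyau du morphisme canonique $\Gr_{0} N \to \Gr_{-1} N$, ce qui suffit à fermer l'argument.
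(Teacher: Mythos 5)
Votre démonstration est correcte et suit pour l'essentiel la preuve du texte : récurrence sur $n$, lemme \ref{encoregrmoins}, description de $i^{+}_{D,X}N$ par le complexe $\Gr_{0}(N)\longrightarrow \Gr_{-1}(N)$ de \cite[4.4-4]{MM}, suite spectrale, puis stabilité des connexions par sous-quotient et extension (argument de variété caractéristique). La seule variante est d'ordre comptable : le texte applique l'hypothèse de récurrence directement aux $\Gr_{q}(N)$, $q=0,-1$, et utilise la suite spectrale d'hypercohomologie de terme $E_{1}^{pq}=\mathcal{H}^{p}i^{+}_{O,D}(\Gr_{q}(N))$, ce qui rend inutiles votre étape (correcte) de stabilité de la propriété $P$ par quotient et l'identification de $\mathcal{H}^{-1}i^{+}_{D,X}N$ comme noyau de la multiplication par $t_{n}$.
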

\begin{proof}
On raisonne par récurrence sur $n$. Le cas où $n=0$ résulte de ce qu'un $\mathcal{D}_{\mathds{A}^{l}_{\mathds{C}}}$-module de type fini sur $\mathds{C}[y_1,\dots ,y_l]$ est une connexion algébrique. Supposons $n>0$ et soit $a\in \mathds{C}$. On dispose d'après \ref{encoregrmoins} d'un hyperplan $i_{D,X}:D \hookrightarrow X$ tel que  $\Gr_{a}(N)$ est inclus dans un module vérifiant $P$. Par hypothèse de récurrence, $\Gr_{a}(N)$  est ponctuellement lisse en $O$. \\ \indent
Or par \cite[4.4-4]{MM}, le complexe $i_{D,X}^{+}N$ est quasi-isomorphe au complexe
\[
\xymatrix{
\Gr_{0}(N)    \ar[r]^-{t_n}
&  \Gr_{-1}(N)
}
\]
On en déduit un isomorphisme de la catégorie dérivée de $\mathcal{D}_{E_{O}}$-mod $$i^{+}_{O,X}(N)\simeq i^{+}_{O,D}(\Gr_{0}(N) \longrightarrow \Gr_{-1}(N))$$ et on conclut à l'aide de la suite spectrale d'hypercohomologie
$$
E_{1}^{pq}=\mathcal{H}^{p}i^{+}_{O,D}(\Gr_{q}(N)) \Longrightarrow  \mathcal{H}^{p+q}i^{+}_{O,X}(N)\quad p\leq 0 \text{ \; et \;} q=0,-1
$$
\end{proof}
\section{Preuve du théorème principal}\label{preuvedeconj}
\subsection{Prologue géométrique}
Soit $(x_{1}, \dots ,x_{n})$ un système de coordonnées  de l'espace affine $\mathds{A}^{n}_{\mathds{C}}$ avec $D$ défini par $x_{n}=0$. On pose $U= \mathds{A}^{n}_{\mathds{C}}\setminus D$. Alors, on a
$$
\mathds{A}^{n}_{\mathds{C}} \ast \mathds{A}^{n}_{\mathds{C}}= \Spec \mathds{C}[x_{i}, t_{i},u^{\pm}]/(x_{n}-ut_{n})
$$
et 
\begin{equation}\label{coordonnees}
(\mathds{A}^{n}_{\mathds{C}} \ast \mathds{A}^{n}_{\mathds{C}})^{(a)}=\Spec \frac{\mathds{C}[x_{i}, t_{i},u^{\pm},y_{1},\dots ,y_{n}]}{(x_{n}-ut_{n}, u-1-t_{n}^{a}y_{n}, (x_{k}-t_{k}-t_{n}^{a}y_{k})_{k<n})}
\end{equation}
Le choix des coordonnées $(x_{1}, \dots ,x_{n})$ induit une identification 
$$
(\mathds{A}^{n}_{\mathds{C}} \ast \mathds{A}^{n}_{\mathds{C}})^{(a)} \simeq \Spec \mathds{C}[t_{1},\dots , t_{n}, y_{1},\dots , y_{n}]
$$
Ce choix étant fait, les coordonnées verticales de $T_{a}$ sont $y_{1}, \dots , y_{n}$, et la projection $(\mathds{A}^{n}_{\mathds{C}} \ast \mathds{A}^{n}_{\mathds{C}})^{(a)} \longrightarrow \mathds{A}^{n}_{\mathds{C}} $ du diagramme \eqref{carrecartesien} est donnée par $(t,y)\longrightarrow t$. Quant à la première projection $U\times U \longrightarrow U$, elle est donnée par
\begin{equation}\label{formulepremiereproj}
(t,y)\longrightarrow (t_1+y_1t_n^{a},\dots , t_{n-1}+y_{n-1}t_n^{a},  t_{n}+y_{n}t_n^{a+1})
\end{equation}

\subsection{$H_a(\mathcal{M})$ vérifie la propriété $L$}
On rappelle que $\rho$ désigne le rang de Poincaré-Katz générique de $\mathcal{M}$.   Posons $\delta_{in}=0$ si $i\neq n$ et  $\delta_{in}=1$ sinon. 
\begin{lemme}\label{loctri}
Si $\mathcal{M}$ est localement engendré comme $\mathcal{D}$-module par un sous-faisceau $\mathcal{O}_{\mathds{A}^{n}_{\mathds{C}}}$-cohérent stable par les $x_i^{\rho+\delta_{in}}\partial_{x_i}$, $i=1,\dots , n$,  alors $H_{a}(\mathcal{M})$ vérifie la propriété $L$ en tout $x \in D$.
\end{lemme}
\begin{proof}
On peut supposer que $x$ est l'origine $O$ de $\mathds{A}_{\mathds{C}}^n$. Notons $\mathcal{R}$ un sous-faisceau $\mathcal{D}_{\mathds{A}^{n}_{\mathds{C}}}$-cohérent de $\mathcal{M}$ comme dans  \ref{loctri}. On a 
$$
H_{a}(\mathcal{M})\simeq p_1^+\mathcal{M}\otimes (p_2^+\mathcal{M})^{\ast}\simeq p_1^+\mathcal{M} \otimes p_2^+(\mathcal{M}^{\ast})
$$
Soit $m$ (resp. $e$) une section de $\mathcal{R}$ (resp. de $\mathcal{M}^{\ast}$) définie au-dessus d'un voisinage de $O$. On note $p_1^+m$ et $p_2^+e$ les sections induites sur $p_1^+\mathcal{M}$ et  $p_2^+(\mathcal{M}^{\ast})$ respectivement.  Par construction,   $\partial_{y_{i}}p_2^+e=0$.  D'après \eqref{formulepremiereproj}, il vient
\begin{align*}
\partial_{y_{i}}  (p_1^+m\otimes p_2^+e)&=(\partial_{y_{i}} p_1^+m)\otimes p_2^+e\\
         & =\frac{\partial}{\partial y_{i}}(t_i+y_it_n^{a+\delta_{in}}) p_1^+\partial_{x_{i}}m    \otimes p_2^+e\\
         &=t_n^{a+\delta_{in}}p_1^+\partial_{x_{i}}m    \otimes p_2^+e \\ 
         &=\frac{(t_n+y_nt_n^{a+1})^{a+\delta_{in}}}{(1+y_nt_n^{a})^{a+\delta_{in}}}p_1^+\partial_{x_{i}}m    \otimes p_2^+e \\ 
         &=\frac{1}{(1+y_nt_n^{a})^{a+\delta_{in}}}p_1^+x_n^{a+\delta_{in}}\partial_{x_{i}}m    \otimes p_2^+e 
\end{align*}
Soit $\mathbf{m}=(m_1, \dots ,m_k)$ une famille $\mathcal{O}_{\mathds{A}^{n}_{\mathds{C}}}$-génératrice locale de $\mathcal{R}$ définie au voisinage de $O$, et $\mathbf{e}=(e_1, \dots ,e_l)$ une trivialisation locale de $\mathcal{M}^{\ast}$ au-dessus du même voisinage. Pour $k\in\mathds{N}$, notons $H_a(\mathcal{M})_k$ le sous-module de $H_a(\mathcal{M})$ engendré par les $p_1^+m_i\otimes (p_2^+e_j)/t^{k}_n$. La suite des $H_a(\mathcal{M})_k$ est croissante. Montrons qu'elle est aussi exhaustive. \\ \indent
On sait déjà que  $\mathcal{N}:=\displaystyle{\bigcup_{k\in\mathds{N}} }H_a(\mathcal{M})_k$ contient $p_1^+\mathcal{R}\otimes p_2^+(\mathcal{M}^{\ast})$. Pour $e\in \mathcal{M}^{\ast}$, $m\in \mathcal{R}$ et $i=1,\dots , n$, on a d'après le calcul précédent
$$
p_1^+\partial_{x_i} m\otimes p_2^+e =\partial_{y_{i}}  (p_1^+m\otimes p_2^+\frac{e}{t_n^{a+\delta_{in}}})\in \mathcal{N}
$$
donc en raisonnant par récurrence sur l'ordre des opérateurs différentiels, il vient
$$
p_1^+Pm \otimes p_2^+(\mathcal{M}^{\ast})\subset \mathcal{N}
$$
pour tout opérateur différentiel $P$. L'exhaustivité de la suite des $H_a(\mathcal{M})_k$ provient alors du fait que $\mathcal{R}$ engendre $\mathcal{M}$ comme $\mathcal{D}$-module. \\ \indent
 Par argument de noethérianité, on a  $H_a(\mathcal{M})=H_a(\mathcal{M})_{k_0}$ pour  $k_0$ assez grand, donc la famille des $p_1^+m_i\otimes (p_2^+e_j)/t^{k_0}_n$ engendre $H_a(\mathcal{M})$ localement. \\ \indent
Puisque $\rho \leq a$, le faisceau $\mathcal{R}$ est stable par $x_n^{ a+\delta_{ln}}\partial_{x_l}$, d'où des relations
$$
x_n^{ a+\delta_{ln}}\partial_{x_l}m_i=f_{li1}(x)m_1+\cdots +f_{lik}(x)m_k
$$ 
pour $l=1,\dots , n$ et $i=1,\dots , k$, avec $f_{liu}\in \mathcal{O}_{\mathds{A}^n_{\mathds{C}},O}$. D'après \eqref{formulepremiereproj}, on a
\begin{align*}
\partial_{y_{l}}  (p_1^+m_i\otimes p_2^+e_j)/t^{k_0}_n
                       &=\frac{1}{(1+y_nt_n^{a})^{a+\delta_{in}}}p_1^+x_n^{a+\delta_{ln}}\partial_{x_{l}}m_i    \otimes  (p_2^+e_j)/t^{k_0}_n  \\
                       &= \frac{1}{(1+y_nt_n^{a})^{a+\delta_{ln}}}    \displaystyle{\sum_{u=1}^{k}}p_1^+f_{liu}(x_1,\dots, x_n)m_u \otimes  (p_2^+e_j)/t^{k_0}_n   \\ 
                       &=  \displaystyle{\sum_{u=1}^{k}}\frac{f_{liu}(t_1+y_1t_n^{a},\dots, t_n+y_nt_n^{a+1})}{(1+y_nt_n^{a})^{a+\delta_{ln}}}   (p_1^+m_u \otimes  (p_2^+e_j)/t^{k_0}_n )
\end{align*}
Puisque $a\geq 1$, la restriction à $\Spec \hat{\mathcal{O}_{\mathds{A}^{n}_{\mathds{C}},O} }$ de la famille des $(p_1^+m_i\otimes p_2^+e_j)/t^{k_0}_n$ fait de $H_a(\mathcal{M})$ un module satisfaisant la propriété $L$ en $O$, et le lemme \ref{loctri} est prouvé.
\end{proof}
Pour conclure la preuve de \ref{conj}, il reste à exhiber un sous-faisceau de $\mathcal{M}$ comme dans \ref{loctri}. C'est dû au fait général suivant:
\begin{lemme}\label{malgrange}
Soit $\mathcal{M}$ une connexion sur  $\mathds{A}^{n}_{\mathds{C}}=\Spec \mathds{C}[x_{1},\dots,x_{n}]$ méromorphe le long du diviseur $D$ donné par $x_{n}=0$. On suppose que le rang de Katz générique $\rho$ de $\mathcal{M}$ est un entier.  Alors $\mathcal{M}$ est engendré comme $\mathcal{D}_X$-module par un sous-faisceau $\mathcal{O}_X$-cohérent stable par les $x_i^{\rho+\delta_{in}}\partial_{x_i}$, $i=1,\dots , n$.
\end{lemme}
\begin{proof}
Soit $\mathcal{R}_{\tau}(\mathcal{M})$ un réseau de Malgrange \ref{malgrange2} pour une section $\tau$ de $\mathds{C}\longrightarrow \mathds{C}/\mathds{Z}$. On a $\mathcal{M}=\mathcal{O}_X[x^{-1}_n]\mathcal{R}_{\tau}(\mathcal{M})$, donc par argument de noethérianité, le $\mathcal{D}$-module engendré par $x^{-k}_n\mathcal{R}_{\tau}(\mathcal{M})$ est égal à $\mathcal{M}$ pour un entier $k_0$ assez grand. Montrons que le faisceau cohérent $x^{-k_0}_n\mathcal{R}_{\tau}(\mathcal{M})$ convient. \\ \indent 
Si $i<n$ et $m\in \mathcal{R}_{\tau}(\mathcal{M})$, on a
$$
x_i^{\rho}\partial_{x_i}(x^{-k_0}_n m)=x^{-k_0}_n(x_i^{\rho}\partial_{x_i}m)
$$
et
$$
x_n^{\rho+1}\partial_{x_n}(x^{-k_0}_n m)=-k_0x_n^{\rho}(x_n^{-k_0}m)+x_n^{-k_0}(x_n^{\rho+1}\partial_{x_n}m)
$$
donc il suffit de montrer que $\mathcal{R}_{\tau}(\mathcal{M})$ est stable par $x_n^{\rho+1}\partial_{x_n}$ et les $x_i^{\rho}\partial_{x_i} , i<n$.  \\ \indent 
D'après \eqref{Rtauinter}, il suffit de vérifier que $\mathcal{R}_{\tau}(\mathcal{M}^{\an})$ est stable par $x_n^{\rho+1}\partial_{x_n}$ et les $x_i^{\rho}\partial_{x_i} , i<n$. Du fait de 
$$
\mathcal{R}_{\tau}(\mathcal{M}^{\an}):=\mathcal{M}^{\an}\cap \mathcal{R}_{\tau}(\hat{\mathcal{M}^{\an}})$$
il suffit montrer que  $\mathcal{R}_{\tau}(\hat{\mathcal{M}^{\an}})$ est stable par $x_n^{\rho+1}\partial_{x_n}$ et les $x_i^{\rho}\partial_{x_i} , i<n$.\\ \indent
Soit $U$ un ouvert donné par le théorème \ref{malgrange1}  appliqué à $\hat{\mathcal{M}^{\an}}$. Par construction, une section de $\mathcal{M}^{\an}$ est une section de $\mathcal{R}_{\tau}(\hat{\mathcal{M}^{\an}})$ dès que sa restriction à $U$ l'est. Donc pour prouver la stabilité de $\mathcal{R}_{\tau}(\hat{\mathcal{M}^{\an}})$ par $x_n^{\rho+1}\partial_{x_n}$ et les $x_i^{\rho}\partial_{x_i} , i<n$, il suffit de se placer au-dessus de $U$. \\ \indent
Localement sur $U$, on a 
$$
f_p:(x_1,\dots , x_{n-1},t)\longrightarrow (x_1,\dots , x_{n-1},t^p)
$$
tel que
\begin{equation}\label{deLT}
f_p^{+}\hat{\mathcal{M}^{\an}}\simeq \displaystyle{\bigoplus_{i=1}^{n}} \mathcal{E}^{\phi_i}\otimes \mathcal{R}_{\phi_i}
\end{equation}
où $\mathcal{E}^{\phi_i}$ désigne $(\hat{\mathcal{O}_{X}}, d+d\phi_i)$ avec $\phi_i\in \mathcal{O}_{D,O}[t^{-1}]$ et $\mathcal{R}_{\phi_i}$ une connexion méromorphe à singularité régulière le long de $D$. Par définition
$$
\mathcal{R}_{\tau}(\hat{\mathcal{M}^{\an}})=\hat{\mathcal{M}^{\an}}\cap \mathcal{R}_{p,\tau}(\hat{\mathcal{M}^{\an}})
$$
où l'intersection doit se comprendre dans $f^{+}_p\hat{\mathcal{M}^{\an}}$, et où $\mathcal{R}_{p,\tau}(\hat{\mathcal{M}^{\an}})$ est un réseau de $f^{+}_p\hat{\mathcal{M}^{\an}}$ induisant la décomposition \eqref{deLT}. \\ \indent
Puisque $\rho$ est la plus grande pente générique de  $\mathcal{M}$, l'ordre du pôle d'une fonction $\phi_i$ intervenant dans \eqref{deLT} est $\leq p\rho$. Par construction, $\mathcal{R}_{p,\tau}(\hat{\mathcal{M}^{\an}})$ est stable par $t^{p\rho}\partial_{x_i}, i=0,\dots ,n-1$ et $t^{p\rho+1}\partial_{t}$.\\ \indent
Pour $m\in \mathcal{R}_{\tau}(\hat{\mathcal{M}^{\an}})$, on a donc
$$
x_n^{\rho}\partial_{x_i}m=t^{p\rho}\partial_{x_i}m \in \mathcal{R}_{p,\tau}(\hat{\mathcal{M}^{\an}})
$$
et du fait de $\partial_{t}=pt^{p-1}\partial_{x_n}$ 
$$
x_n^{\rho+1}\partial_{x_n}m=t^{p\rho+1}\partial_{t}m/p  \in \mathcal{R}_{p,\tau}(\hat{\mathcal{M}^{\an}})
$$
\end{proof}
\section{Rappels sur les réseaux de Malgrange}
La référence pour cette section est \cite{Mal96}. Soit $X$ une variété analytique complexe lisse $X$, $Z$  une hypersurface de $X$ et $\mathcal{M}$ une connexion méromorphe formelle à pôles  le long de $Z$, à savoir un $\hat{\mathcal{O}_{X}}(\ast Z)$-module\footnote{$\hat{\mathcal{O}_{X}}$ désigne la formalisation de $\mathcal{O}_{X}$ le long de l'idéal défini par $Z$.} localement libre de rang fini muni d'une connexion méromorphe à pôles le long de $Z$. \\ \indent
Pour $\phi \in \hat{\mathcal{O}_{X}}$, on note  $\mathcal{E}^{\phi}$ la connexion méromorphe formelle $(\hat{\mathcal{O}_{X}}, d+d\phi)$. 
\begin{definition}\label{decompadmiss}
On dit que $\mathcal{M}$ admet une \textit{décomposition admissible} en $x$ point de lissité de $Z$, si au voisinage de $x$, la connexion $\mathcal{M}$ se décompose en une somme directe finie de connexions méromorphes formelles du type $\mathcal{E}^{\phi}\otimes \mathcal{R}_{\phi}$ avec $\phi \in \hat{\mathcal{O}_{X}}(\ast Z)$ et $\mathcal{R}_{\phi}$ à singularité régulière le long de $Z$.
\end{definition}
Dans une décomposition admissible, on peut remplacer  un facteur $\phi$ par $\phi+f$ avec $f \in \hat{\mathcal{O}_{X}}$. Quitte à regrouper certains termes, on peut donc choisir les  $\mathcal{E}^{\phi_i}\otimes \mathcal{R}_{\phi_i}$, $i=1,\dots ,n$ tels que $\phi_i-\phi_j$ soit non nulle et admette un pôle le long de $Z$ pour $i\neq j$. Alors la décomposition \ref{decompadmiss} est unique. C'est celle qui sera considérée dans toute la suite.
Le premier pas vers la construction des réseaux de Malgrange est le 
\begin{theorem}\label{malgrange1}
Il existe un ouvert $U$ du lieu de lissité de $Z$ avec $S:=Z\setminus U$ fermé\footnote{Ceci signifie que tout point de $S$ est inclus localement dans un fermé analytique de $Z$ de codimension au moins $1$.} de codimension $1$ de $Z$ et tel que pour tout $x\in U$, on dispose d'un système de coordonnées locales $(x_1,\dots ,x_n)$ dans lequel $Z$ est défini par $x_n=0$ et d'un entier $p$ tel que si $f_p$ désigne l'application $(x_1,\dots ,x_{n-1},t)\longrightarrow (x_1,\dots ,x_{n-1},t^p)$, alors la connexion $f_p^{+}\mathcal{M}$ admet une décomposition admissible au sens de \ref{decompadmiss}.
\end{theorem}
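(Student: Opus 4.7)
Le plan est de commencer par se placer au point générique $\eta$ de $Z$ (en traitant chaque composante irréductible séparément). Au voisinage formel de $\eta$, l'anneau local formel $\hat{\mathcal{O}_{X,\eta}}$ s'identifie à $k(Z)[[t]]$, où $k(Z)$ est le corps des fonctions rationnelles de $Z$ et $t$ une équation locale de $Z$. La fibre de $\mathcal{M}$ en $\eta$ devient ainsi un module à connexion sur $k(Z)((t))$. Le théorème classique de Levelt-Turrittin sur un corps de base de caractéristique nulle fournit alors un entier $p\geq 1$ et une décomposition, après tiré en arrière par $t\mapsto t^p$, en
$$
\bigoplus_i \mathcal{E}^{\phi_i}\otimes \mathcal{R}_{\phi_i},
$$
où $\phi_i\in t^{-1}k(Z)[t^{-1}]$, les $\mathcal{R}_{\phi_i}$ sont à singularité régulière, et les $\phi_i-\phi_j$ sont non nulles à pôle pour $i\neq j$.

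L'étape suivante consiste à propager cette décomposition générique en une décomposition admissible au sens de \ref{decompadmiss} au-dessus d'un ouvert $U$ du lieu lisse de $Z$. Les coefficients des $\phi_i$ et les données définissant les $\mathcal{R}_{\phi_i}$ étant dans $k(Z)$, il existe un ouvert $U_1$ du lieu lisse de $Z$ sur lequel ils sont réguliers, le complémentaire étant un fermé analytique de codimension $\geq 1$. De plus, le lieu $S_1\subset U_1$ où deux $\phi_i$ se rencontrent ou perdent leur polarité mutuelle est également un fermé analytique de codimension $\geq 1$, la décomposition existant par construction au point générique. En tout $x\in U:=U_1\setminus S_1$, la distinction des parties principales des $\phi_i$ permet, par approximations successives sur l'ordre du pôle, de relever les projecteurs formels génériques en projecteurs de $\End(f_p^{+}\mathcal{M})$ sur la fibre formelle en $x$.

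La difficulté principale sera de vérifier la cohérence, au voisinage de chaque $x\in U$, du système de coordonnées et du revêtement ramifié $f_p$ : il s'agit de s'assurer que les $\phi_i$ construits au point générique s'étendent en des éléments de $\hat{\mathcal{O}_{X,x}}(\ast Z)$ après changement de variable $t\mapsto t^p$, ce qui impose de choisir $U$ assez petit pour que les coefficients (dans $k(Z)$) des parties principales soient réguliers en $x$. La codimension $1$ de $S:=Z\setminus U$ en découle, chacun des deux fermés $Z\setminus U_1$ et $S_1$ étant défini par l'annulation de fonctions analytiques non identiquement nulles sur $Z$. La décomposition ainsi obtenue est alors admissible au sens de \ref{decompadmiss} par construction des $\phi_i$.
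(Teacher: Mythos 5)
Un point préalable : l'article ne démontre pas \ref{malgrange1}; c'est un rappel, emprunté tel quel à \cite{Mal96}, et seul \ref{malgrange2} est accompagné d'une référence précise. Votre proposition revient donc à re-démontrer le théorème de Malgrange, et la stratégie que vous suivez (Levelt--Turrittin au point générique, puis étalement sur un ouvert de $Z$) est la stratégie naturelle; mais telle quelle elle comporte deux lacunes sérieuses. D'abord, le théorème de Levelt--Turrittin appliqué sur $k(Z)((t))$ ne fournit pas des $\phi_i$ à coefficients dans $k(Z)$ : le corps $k(Z)$ n'étant pas algébriquement clos, la décomposition n'existe en général qu'après une extension finie $L/k(Z)$ des coefficients, en plus de la ramification $t\mapsto t^p$. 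Pour obtenir des $\phi_i\in \hat{\mathcal{O}_{X,x}}(\ast Z)$ aux points $x\in U$, il faut encore montrer que cette extension se trivialise localement (hors du lieu de branchement du revêtement génériquement fini de $Z$ correspondant, le lemme de Hensel et le fait que le corps résiduel est $\mathds{C}$ scindent $L$), ou procéder par descente galoisienne; cette étape est absente de votre esquisse. Accessoirement, la section est écrite dans le cadre analytique : $Z$ n'a pas de point générique et $k(Z)$ peut être réduit à $\mathds{C}$; il faut travailler avec $\Frac(\mathcal{O}_{Z,z})$ localement (ou se restreindre au cadre algébrique de \ref{malgrange}, qui suffit pour l'application).

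La lacune principale est ailleurs : le passage de la décomposition générique à une décomposition au voisinage des points de $U$. Les objets à étaler — les projecteurs de la décomposition dans $\End(f_p^{+}\mathcal{M})$, les parties régulières $\mathcal{R}_{\phi_i}$ — sont des séries formelles en $t$ à coefficients dans $k(Z)$, donc une infinité de coefficients ayant chacun son propre diviseur polaire. L'argument « les coefficients étant dans $k(Z)$, il existe un ouvert $U_1$ sur lequel ils sont réguliers, le complémentaire étant de codimension $\geq 1$ » ne vaut que pour un nombre fini de fonctions : une réunion dénombrable de diviseurs n'est en général contenue dans aucun diviseur (elle peut être dense dans $Z$). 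C'est précisément là que réside le contenu du théorème : il faut montrer qu'un nombre fini de données (constance du polygone de Newton, discriminants et différences de termes dominants à chaque étape de l'algorithme de Turrittin, qui termine en un nombre fini d'étapes) contrôle toute la construction, le lemme de scindage fournissant ensuite des corrections récursives dont les dénominateurs ne font intervenir que ces données. Votre phrase « par approximations successives sur l'ordre du pôle » fait signe vers ce lemme de scindage, mais n'explique pas pourquoi le lieu d'exclusion reste un fermé analytique de codimension $1$; de même, la « difficulté principale » que vous identifiez (cohérence du choix de coordonnées et de $f_p$) n'est pas le point dur. En l'état, la preuve proposée ne remplace donc pas celle de \cite{Mal96}.
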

Dans la suite, on garde les notations de \ref{malgrange1}. Soit $\tau$ une section de $\mathds{C}\longrightarrow \mathds{C}/\mathds{Z}$. Pour un point $x\in U$,
soit
$$
f_p^{+}\mathcal{M}\simeq \displaystyle{\bigoplus_{i=1}^{n}} \mathcal{E}^{\phi_i}\otimes \mathcal{R}_{\phi_i}
$$
la décomposition admissible de $\mathcal{M}$ au-dessus d'un voisinage de $x$ inclus dans $U$. Notons $\mathcal{R}_{p,\tau}$ le réseau de $f_p^{+}\mathcal{M}$ somme directe des réseaux de Deligne  \cite{Del} associés à $\tau$ des connexions régulières $\mathcal{R}_{\phi_i}$. Posons $\mathcal{R}_{\tau}:=\mathcal{R}_{p,\tau}\cap  \mathcal{M}$. On vérifie que $\mathcal{R}_{\tau}$ ne dépend ni de $p$ ni du choix de coordonnées $(x_1,\dots,x_n)$, et on obtient ainsi un réseau bien défini de $\mathcal{M}$ au-dessus de $U$. \\ \indent
Suivant Malgrange\footnote{Voir ce qui suit \cite[3.3.1]{Mal96}.}, notons  $\mathcal{R}_{\tau}(\mathcal{M})$ le sous-faisceau en $\hat{\mathcal{O}_{X}}$-module de $\mathcal{M}$ des sections de  $\mathcal{M}$ dont la restriction en dehors de $S$ défini une section de $\mathcal{R}_{\tau}$. On a alors \cite[3.3.1]{Mal96}
\begin{theorem}\label{malgrange2}
Le faisceau $\mathcal{R}_{\tau}(\mathcal{M})$ est cohérent, et on a $\mathcal{M}=\hat{\mathcal{O}_{X}}(\ast Z)\mathcal{R}_{\tau}(\mathcal{M})$.
\end{theorem}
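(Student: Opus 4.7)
The plan is to prove the two assertions of \ref{malgrange2} separately, both being local statements on $X$, and to reduce the main question to a Hartogs-type extension across the bad locus $S$.

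For the generation statement $\mathcal{M}=\hat{\mathcal{O}_{X}}(\ast Z)\mathcal{R}_{\tau}(\mathcal{M})$, one first checks it on the open $U\subset Z$ where admissible decomposition exists after ramification. There, by construction, $\mathcal{R}_{p,\tau}$ is locally a finite direct sum $\bigoplus \mathcal{E}^{\phi_i}\otimes\mathcal{R}_{\phi_i}$, each $\mathcal{E}^{\phi_i}$ being free of rank one over $\hat{\mathcal{O}_{X}}$ and each Deligne lattice $\mathcal{R}_{\phi_i}$ satisfying $\hat{\mathcal{O}_{X}}(\ast Z)\mathcal{R}_{\phi_i}=\mathcal{R}_{\phi_i}$. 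Hence $\mathcal{R}_{p,\tau}$ generates $f_p^{+}\mathcal{M}$ meromorphically, and since $f_p$ is finite flat, intersecting with $\mathcal{M}$ descends this to $\hat{\mathcal{O}_{X}}(\ast Z)\mathcal{R}_{\tau}=\mathcal{M}$ on $U$. The definition of $\mathcal{R}_{\tau}(\mathcal{M})$ as the sheaf of sections of $\mathcal{M}$ whose restriction to $X\setminus S$ lies in $\mathcal{R}_{\tau}$ then propagates the identity to all of $X$, since sections of the $\hat{\mathcal{O}_{X}}(\ast Z)$-coherent module $\mathcal{M}$ restrict injectively to the complement of the codimension-$\geq 2$ subset $S$.

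For coherence, the same local description gives coherence of $\mathcal{R}_{\tau}$ on $U$: each $\mathcal{R}_{p,\tau}$ is a finite direct sum of tensor products of free rank-one lattices with Deligne lattices (themselves $\hat{\mathcal{O}_{X}}$-coherent and locally free), hence $\hat{\mathcal{O}_{X}}$-coherent; the intersection $\mathcal{R}_{p,\tau}\cap\mathcal{M}$ inside $f_p^{+}\mathcal{M}$ is then coherent as a kernel in the category of coherent modules. One checks that this lattice is independent of the ramification order $p$ and of the chosen coordinates, so that the local constructions glue into a well-defined coherent $\hat{\mathcal{O}_{X}}$-module $\mathcal{R}_{\tau}$ on $U$.

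The main obstacle is extending coherence from $U$ to all of $X$, across the closed subset $S$ which has codimension $\geq 2$ in $X$. The strategy is a Hartogs-type argument: $\mathcal{R}_{\tau}$ is locally free on $U$, hence reflexive, and $\hat{\mathcal{O}_{X}}$ (formal completion of a regular sheaf along the smooth hypersurface $Z$) has depth $\geq 2$ at points of $S$ in the directions transverse to $S$ inside $X$. Under these conditions, the pushforward $j_{\ast}\mathcal{R}_{\tau}$, where $j:X\setminus S\hookrightarrow X$, remains $\hat{\mathcal{O}_{X}}$-coherent, and intersecting with the $\hat{\mathcal{O}_{X}}(\ast Z)$-coherent module $\mathcal{M}$ preserves coherence, yielding $\mathcal{R}_{\tau}(\mathcal{M})$ as a coherent $\hat{\mathcal{O}_{X}}$-module on $X$. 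The hardest step is precisely the depth/reflexivity bookkeeping on the formal structure sheaf $\hat{\mathcal{O}_{X}}$ that justifies the extension across $S$; once this is in place, both assertions of \ref{malgrange2} follow.
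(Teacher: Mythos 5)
The paper does not actually prove this statement: it is quoted as Malgrange's théorème 3.3.1 of \cite{Mal96}, so your attempt has to be measured against Malgrange's proof, which is of a genuinely different and much more substantial nature than what you propose. Your description of the situation over $U$ is acceptable in outline (though coherence and local freeness of $\mathcal{R}_{\tau}$ on $U$ deserve a word: $\mathcal{R}_{\tau}=(f_{p\ast}\mathcal{R}_{p,\tau})^{\mu_{p}}$ is a direct summand of a locally free $\hat{\mathcal{O}_{X}}$-module, which is what makes the intersection $\mathcal{M}\cap\mathcal{R}_{p,\tau}$ manageable). The genuine gap is the central step, the extension across $S$. The codimension that governs extension of sections of an $\hat{\mathcal{O}_{X}}$-module is the codimension of $S$ inside $Z$ (the support of $\hat{\mathcal{O}_{X}}$), which by \ref{malgrange1} is only $1$, not the codimension $2$ of $S$ in $X$: the direction transverse to $Z$ is formal and contributes no Hartogs phenomenon. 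Concretely, $j_{\ast}\mathcal{R}_{\tau}$ is never coherent when $S\neq\emptyset$: if $g$ is a local equation of $S$ in $Z$, then $1/g$, and even $e^{1/g}$, define sections of $\hat{\mathcal{O}_{X}}$ on $Z\setminus S$, so $j_{\ast}\hat{\mathcal{O}_{X}}$, and likewise $j_{\ast}\mathcal{R}_{\tau}$, contain sections with arbitrary singularities along $S$ and are not of finite type at points of $S$. Hence the assertion that ``the pushforward $j_{\ast}\mathcal{R}_{\tau}$ remains $\hat{\mathcal{O}_{X}}$-coherent'' is false, and the subsequent ``intersecting with $\mathcal{M}$ preserves coherence'' is not an instance of taking a kernel between coherent sheaves.

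What is actually defined in the paper is the gap sheaf: the subsheaf of $\mathcal{M}$ (sections already defined across $S$) whose restriction off $S$ lies in $\mathcal{R}_{\tau}$. Its coherence is not a formal consequence of reflexivity plus a depth count: even in the classical analytic category over $\mathcal{O}_{X}$, coherence of such subsheaf extensions across an analytic set requires the Siu--Trautmann type gap-sheaf theorems with dimension hypotheses, and here everything lives over the formal completion $\hat{\mathcal{O}_{X}}$, where such extension results have to be re-established; this is precisely the hard content of \cite[3.3.1]{Mal96}, not a bookkeeping step one can defer. The same issue affects your argument for $\mathcal{M}=\hat{\mathcal{O}_{X}}(\ast Z)\mathcal{R}_{\tau}(\mathcal{M})$: injectivity of restriction to the complement of $S$ gives nothing; one must produce, near each point of $S$, enough sections of $\mathcal{M}$ lying in $\mathcal{R}_{\tau}$ off $S$ to generate $\mathcal{M}$ over $\hat{\mathcal{O}_{X}}(\ast Z)$, which is again an extension (surjectivity) problem across $S$ that your proposal does not address.
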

On peut transporter le travail de Malgrange dans le contexte algébrique de la façon suivante: soit $X$ une variété algébrique lisse, et soit $\mathcal{M}$ une connexion méromorphe sur $X$ à pôles le long d'une hypersurface $Z$ de $X$. Soit $j:X\hookrightarrow \overline{X}$ une compactification lisse de $X$ tel que $D=\overline{X}\setminus X$ est un diviseur à croisements normaux, et soit $\tau$ une section de $\mathds{C}\longrightarrow \mathds{C}/\mathds{Z}$. Notons $\overline{Z}$ l'adhérence de $Z$ dans $\overline{X}$.\\ \indent
Alors, $(j_{\ast}\mathcal{M})^{\an}$ est une connexion analytique sur $\overline{X}$ méromorphe le long de $D\cup \overline{Z}$. Posons $$\hat{(j_{\ast}\mathcal{M})^{\an}}:=\hat{\mathcal{O}_{\overline{X}}}(\ast (D\cup \overline{Z}))\otimes (j_{\ast}\mathcal{M})^{\an}$$
D'après \cite[1.2]{Mal96}, le sous-faisceau de $(j_{\ast}\mathcal{M})^{\an}$ défini par
$$
\mathcal{R}_{\tau}((j_{\ast}\mathcal{M})^{\an}):=(j_{\ast}\mathcal{M})^{\an}\cap \mathcal{R}_{\tau}(\hat{(j_{\ast}\mathcal{M})^{\an}})$$ est cohérent et vérifie 
$$
\hat{\mathcal{O}_{\overline{X}}}\otimes \mathcal{R}_{\tau}((j_{\ast}\mathcal{M})^{\an})=\mathcal{R}_{\tau}(\hat{(j_{\ast}\mathcal{M})^{\an}})
$$
et
$$(j_{\ast}\mathcal{M})^{\an}=\mathcal{O}_{\overline{X}}(\ast (D\cup \overline{Z}))\mathcal{R}_{\tau}((j_{\ast}\mathcal{M})^{\an})$$
Par  \cite{GAGA}, le sous-faisceau $\mathcal{R}_{\tau}((j_{\ast}\mathcal{M})^{\an})$ de $(j_{\ast}\mathcal{M})^{\an}$ est l'analytifié d'un sous-faisceau cohérent de $j_{\ast}\mathcal{M}$. On en déduit par restriction à $X$ un sous-faisceau cohérent de $\mathcal{M}$ noté $\mathcal{R}_{\tau}(\mathcal{M})$ et dont $\mathcal{R}_{\tau}(\mathcal{M}^{\an})$ est l'analytifié. Il est indépendant du choix de la compactification  $\overline{X}$. Pour le voir il suffit d'observer la relation
\begin{equation}     \label{Rtauinter}
\mathcal{R}_{\tau}(\mathcal{M})=\mathcal{M}\cap \mathcal{R}_{\tau}(\mathcal{M}^{\an})
\end{equation} 
qui découle de la fidèle platitude de $(\mathcal{O}_{X,x}, \mathcal{O}_{X^{\an},x})$ pour $x\in Z$ combinée au
\begin{lemme}\label{ptilemmealglcom}
Soit $A\longrightarrow B$ un morphisme fidèlement plat,  $M$ un $A$-module, et $N$ un sous-module de $M$. Alors on a $N=M\cap (B\otimes_A N)$, où l'intersection a lieu dans $B\otimes_A M$.
\end{lemme}
\noindent
Par \eqref{Rtauinter}, on a  $\mathcal{M}=\mathcal{O}_{X}(\ast Z)\mathcal{R}_{\tau}(\mathcal{M})$.
Le faisceau $\mathcal{R}_{\tau}(\mathcal{M})$ est appelé le \textit{réseau\footnote{La terminologie de réseau est trompeuse car on ne sait pas a priori si $\mathcal{R}_{\tau}(\mathcal{M})$ est localement libre en dimension $>2$. Voir \cite[3.3.2]{Mal96}.} de Malgrange de $\mathcal{M}$ associé à $\tau$}.

\bibliographystyle{amsalpha}
\bibliography{AScaslisse}

\end{document}